\newtheorem{theorem}{Theorem}[section]
\newtheorem{lemma}[theorem]{Lemma}
\newtheorem{corollary}[theorem]{Corollary}
\theoremstyle{definition}
\newtheorem{definition}[theorem]{Definition}
\newtheorem{remark}[theorem]{Remark}
\newlength{\Oldarrayrulewidth}
\newcommand{\Z}{\mathbb{Z}}
\newcommand{\p}{\textup{\textbf{p}}}
\newcommand{\res}{\textup{res}}
\renewcommand{\mod}[2]{\equiv#1\textup{ (mod }#2\textup{)}}
\def\m@th{\mathsurround=0pt}
\def\sm#1{\null\,\vcenter{\baselineskip9pt\lineskip.23ex\m@th
    \ialign{\hfil$\scriptstyle##$\hfil&&\ \hfil$\scriptstyle##$\hfil\crcr
    \mathstrut\crcr\noalign{\kern-\baselineskip}
    #1\crcr\mathstrut\crcr\noalign{\kern-\baselineskip}}}\,}
\def\smnp#1{\null\,\vcenter{\baselineskip9pt\lineskip.23ex\m@th
    \ialign{\hfil$\scriptstyle##$\hfil&&\ \ \hfil$\scriptstyle##$\hfil\crcr
    \mathstrut\crcr\noalign{\kern-\baselineskip}
    #1\crcr\mathstrut\crcr\noalign{\kern-\baselineskip}}}\,}
\begin{document}

\title{On the Properties of Fibotomic Polynomials}

\author[1]{Cameron~Byer\thanks{chunecake@gmail.com}}

\author[2]{Tyler~Dvorachek\thanks{dvortj19@uwgb.edu}}

\author[3]{Emily~Eckard\thanks{e.m.eckard@email.msmary.edu}}

\author[4]{Joshua~Harrington\thanks{joshua.harrington@cedarcrest.edu}}

\author[5]{Lindsey~Wise\thanks{wiselm1@appstate.edu}}

\author[6]{Tony~W.~H.~Wong\thanks{wong@kutztown.edu}}

\affil[1]{Department of Mathematics, Eastern Mennonite University}
\affil[2]{Department of Mathematics, University of Wisconsin-Green Bay}
\affil[3]{Department of Mathematics, Mount St.\ Mary's University}
\affil[4]{Department of Mathematics, Cedar Crest College}
\affil[5]{Department of Mathematics, Appalachian State University}
\affil[6]{Department of Mathematics, Kutztown University of Pennsylvania}
\date{\today}

\maketitle

\begin{abstract}
Define the $n$-th fibotomic polynomial to be the product of the monic irredicible factors of the $n$-th Fibonacci polynomial which are not factors of any Fibonacci polynomial of smaller degree.  In this paper, we prove a number of properties of the fibotomic polynomials. This includes determining the discriminant of the fibotomic polynomials and the resultant of pairs of fibotomic polynomials. Furthermore, we completely determine the factorization form of the fibotomic polynomials in prime fields. Results are also generalized for the bivariate homogenous fibotomic polynomials.\\
\textit{MSC:} 11B39, 12E10\\
\textit{Keywords:} Fibonacci, fibotomic, polynomial, discriminant, resultant, prime field
\end{abstract}

\section{Introduction}\label{sec:intro}

The well-known Fibonacci polynomials are defined by letting $F_1(x)=1$, $F_2(x)=x$, and $F_n(x)=F_{n-1}(x)\cdot x+F_{n-2}(x)$ for all integers $n\geq 3$.  In 1969, Webb and Parberry \cite{wp} showed that $F_n(x)$ is irreducible in $\mathbb{Z}[x]$ if and only if $n$ is prime.  The following definition was first introduced by Levy \cite{levy} in 2001.

\begin{definition} 
Let $n\geq 2$ be an integer.  The $n$-th \emph{fibotomic polynomial}, written as $\Psi_n(x)$, is the product of the monic irreducible factors of $F_n(x)$ which are not factors of $F_k(x)$ for any $k<n$.  For consistency, we define $\Psi_1(x)=1$. Hence, 
$$F_n(x)=\prod_{d\mid n}\Psi_d(x)$$
for all positive integers $n$.
\end{definition}

It follows from Webb and Parberry's result that for any prime $p$, $\Psi_p(x)=F_p(x)$ is irreducible in $\mathbb{Z}[x]$.  It was further shown by Levy that $\Psi_n(x)$ is irreducible in $\mathbb{Z}[x]$ for every integer $n\geq 2$.  However, Kitayama and Shiomi \cite{ks} showed that $\Psi_n(x)$ is often reducible in finite fields. More recently, Sagan and Tirrell \cite{st} studied the bivariate Lucas polynomials and their factorization using Lucas atoms. The bivariate Lucas polynomials are defined such that $L_1(s,t)=1$, $L_2(s,t)=s$, and $L_n(s,t)=L_{n-1}(s,t)\cdot s+L_{n-2}(s,t)\cdot t$ for all integers $n\geq3$.

It seems that the bivariate Lucas polynomials are more general than the Fibonacci polynomials. However, a simple homogenization of the Fibonacci polynomials together with a substitution allows us to transform a Fibonacci polynomial back to a bivariate Lucas polynomial. Define $F_n(x,y)=y^{n-1}F_n\big(\frac{x}{y}\big)$ for all positive integers $n$. Then $F_n(x,y)$ is a homogeneous polynomial since the degree of $F_n(x)$ is $n-1$. Substituting $F_n\big(\frac{x}{y}\big)=\frac{1}{y^{n-1}}F_n(x,y)$ into the recurrence definition of the Fibonacci polynomials, we have
$$\frac{1}{y^{n-1}}F_n(x,y)=\frac{x}{y}\cdot\frac{1}{y^{n-2}}F_{n-1}(x,y)+\frac{1}{y^{n-3}}F_{n-2}(x,y).$$
Multiplying $y^{n-1}$ to both sides of the equation, we get $F_n(x,y)=xF_{n-1}(x,y)+y^2F_{n-2}(x,y)$, which we call the $n$-th \emph{homogenized Fibonacci polynomial}. Together with the observation that $F_1(x,y)=y^0F_1\big(\frac{x}{y}\big)=1$ and $F_2(x,y)=y^1F_2\big(\frac{x}{y}\big)=x$, we can easily see that a substitution of $x=s$ and $y^2=t$ yields the $n$-th bivariate Lucas polynomial.

Define $\Psi_1(x,y)=1$, and for all integers $n\geq2$, define $\Psi_n(x,y)=y^{\varphi(n)}\Psi_n\big(\frac{x}{y}\big)$ as the $n$-th \emph{homogenized fibotomic polynoimal}. It is easy to see that
$$F_n(x,y)=\prod_{d\mid n}\Psi_d(x,y)$$
for all positive integers $n$. The main results of this article are the following theorems regarding homogenized fibotomic polynomials.

\begin{theorem}\label{thm:homodiscriminant}
Let $n\geq2$ be an integer. Then the discriminant of $\Psi_n(x,y)$ with respect to $x$ is given by
$$\begin{cases}
\frac{(-1)^{\lfloor\varphi(n)/2\rfloor}(2n)^{\varphi(n)}y^{\varphi(n)(\varphi(n)-1)}}{p^{p^{\alpha-1}+1}}&\text{if $n=p^\alpha$ for some prime $p$ and some positive integer $\alpha$};\\
\frac{(-1)^{\varphi(n)/2}(2n)^{\varphi(n)}y^{\varphi(n)(\varphi(n)-1)}}{\underset{p\mid n}{\prod}p^{\varphi(n)/(p-1)}}&\text{otherwise}.
\end{cases}$$
\end{theorem}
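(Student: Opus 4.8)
The plan is to reduce the homogeneous statement to a one-variable discriminant and then exploit the explicit trigonometric description of the roots of $\Psi_n$. First I would record that, writing $\Psi_n(x)=\prod_{a=1}^{\varphi(n)}(x-r_a)$, the homogenization satisfies $\Psi_n(x,y)=\prod_a(x-r_a y)$, so that the roots of $\Psi_n(x,y)$ in the variable $x$ are exactly the $r_a y$. Hence
$$\operatorname{disc}_x\big(\Psi_n(x,y)\big)=\prod_{a<b}\big(y r_a-y r_b\big)^2=y^{\varphi(n)(\varphi(n)-1)}\operatorname{disc}\big(\Psi_n(x)\big),$$
which already produces the factor $y^{\varphi(n)(\varphi(n)-1)}$ and reduces everything to computing $\operatorname{disc}(\Psi_n(x))$. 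Next I would establish that, since $F_n(x)=\prod_{k=1}^{n-1}\big(x-2i\cos(k\pi/n)\big)$, the root $2i\cos(k\pi/n)$ belongs to $\Psi_n$ precisely when $\gcd(k,n)=1$; thus $\Psi_n(x)=\prod_{\gcd(k,n)=1}(x-r_k)$ with $r_k=2i\cos(k\pi/n)$ and $1\le k\le n-1$ (here $i^2=-1$).

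With the roots in hand I would use $\operatorname{disc}(\Psi_n)=(-1)^{\binom{\varphi(n)}{2}}\prod_{\gcd(k,n)=1}\Psi_n'(r_k)$. Since $F_n=\Psi_n\cdot\prod_{d\mid n,\,d<n}\Psi_d$ and the roots of the various $\Psi_d$ are pairwise disjoint, differentiating and evaluating at a simple root $r_k$ of $\Psi_n$ gives $\Psi_n'(r_k)=F_n'(r_k)\big/\prod_{d\mid n,\,d<n}\Psi_d(r_k)$. The numerator I can evaluate in closed form: from the Chebyshev-type identity $F_n(2i\cos\theta)=-i^{\,n+1}\sin(n\theta)/\sin\theta$ one gets $F_n'(r_k)=i^{\,n} n(-1)^k\big/\big(2\sin^2(k\pi/n)\big)$. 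Using $\sum_{\gcd(k,n)=1}k=n\varphi(n)/2$ together with the product evaluation $\prod_{\gcd(k,n)=1}2\sin(k\pi/n)=\Phi_n(1)$ (coming from $\prod_{\gcd(k,n)=1}(1-e^{2\pi ik/n})=\Phi_n(1)$), all of the powers of $i$ and the accumulated signs collapse, yielding the clean value $\prod_{\gcd(k,n)=1}F_n'(r_k)=(2n)^{\varphi(n)}/\Phi_n(1)^2$.

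The remaining, and principal, difficulty is the denominator $\prod_{\gcd(k,n)=1}\prod_{d\mid n,\,d<n}\Psi_d(r_k)=\prod_{d\mid n,\,d<n}\res(\Psi_n,\Psi_d)$. Each factor is a product $\prod_{k,l}2i\big(\cos(k\pi/n)-\cos(l\pi/d)\big)$ over the relevant primitive residues, which after $\cos A-\cos B=-2\sin\frac{A+B}{2}\sin\frac{A-B}{2}$ becomes a product of sines over coprime residue classes. Evaluating these nested products by the same root-of-unity technique as above is where the prime structure enters, contributing $p^{\,p^{\alpha-1}-1}$ when $n=p^\alpha$ and $\prod_{p\mid n}p^{\varphi(n)/(p-1)}$ in general. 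I expect the bookkeeping of these sine products (equivalently, invoking the companion resultant formula for pairs of fibotomic polynomials) to be the crux of the argument.

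Finally I would assemble the pieces. Combining $\prod F_n'(r_k)=(2n)^{\varphi(n)}/\Phi_n(1)^2$ with the denominator shows the full denominator of $\operatorname{disc}(\Psi_n)$ is $\Phi_n(1)^2\prod_{d\mid n,\,d<n}\res(\Psi_n,\Psi_d)$, which equals $p^{\,p^{\alpha-1}+1}$ when $n=p^\alpha$ (using $\Phi_n(1)=p$) and $\prod_{p\mid n}p^{\varphi(n)/(p-1)}$ otherwise (using $\Phi_n(1)=1$). Since the numerator product is positive, the sign is carried entirely by the prefactor $(-1)^{\binom{\varphi(n)}{2}}$, and the congruence $\binom{\varphi(n)}{2}\equiv\floor{\varphi(n)/2}\pmod 2$ shows this sign is $(-1)^{\floor{\varphi(n)/2}}$, which coincides with $(-1)^{\varphi(n)/2}$ exactly when $\varphi(n)$ is even, i.e. in the non-prime-power case. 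This yields precisely the two cases in the statement. As a safeguard I would verify the entire computation against $n=3,4,6$, where $\Psi_n(x)$ equals $x^2+1,\ x^2+2,\ x^2+3$ with discriminants $-4,-8,-12$ matching the formula.
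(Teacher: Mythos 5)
Your proposal is correct, and it reaches the theorem by a genuinely different route from the paper's. The homogenization step is identical: the paper makes exactly your observation (end of Section~1) that the roots of $\Psi_n(x,y)$ in $x$ are $y$ times those of $\Psi_n(x)$, giving the factor $y^{\varphi(n)(\varphi(n)-1)}$ and reducing everything to $\Delta(\Psi_n(x))$, its Corollary~4.2. From there, however, the paper never touches $F_n'$ or resultants: it forms the ratio $\Delta(\Psi_n(x))/\Delta(\Phi_n(x))$ as a quotient of ordered root-difference products, collapses it via $\cos A-\cos B=-2\sin\frac{A+B}{2}\sin\frac{A-B}{2}$, the double-angle and complementary-angle identities to the single product $\bigl(\prod_{\gcd(s,n)=1}\sin(s\pi/n)\bigr)^{-1}$, evaluates that by inclusion--exclusion from $\prod_{s=1}^{n-1}\sin(s\pi/n)=n/2^{n-1}$, and multiplies by the known cyclotomic discriminant. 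You instead compute $\Delta(\Psi_n)=(-1)^{\binom{\varphi(n)}{2}}\prod_k\Psi_n'(r_k)$ directly, splitting $\Psi_n'(r_k)=F_n'(r_k)\big/\prod_{d\mid n,\,d<n}\Psi_d(r_k)$, and your intermediate identities all check: $F_n(2i\cos\theta)=-i^{\,n+1}\sin(n\theta)/\sin\theta$ does give $F_n'(r_k)=i^{\,n}n(-1)^k/(2\sin^2(k\pi/n))$, and with $\sum_{\gcd(k,n)=1}k=n\varphi(n)/2$, $\prod_{\gcd(k,n)=1}2\sin(k\pi/n)=\Phi_n(1)$, and the residual sign $i^{\,n\varphi(n)}(-1)^{n\varphi(n)/2}=(-1)^{n\varphi(n)}=1$ for $n\geq3$, the numerator product is indeed $(2n)^{\varphi(n)}/\Phi_n(1)^2$; likewise $\binom{\varphi(n)}{2}\equiv\floor{\varphi(n)/2}\pmod 2$ is right. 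The one step you leave as ``bookkeeping'' is best closed exactly as you suggest, by invoking the companion resultant theorem (the paper's Corollary~4.4), which is proved there independently of the discriminant via comparison with Lehmer's cyclotomic resultant, so there is no circularity; two small points then finish it: $\res(\Psi_n,\Psi_d)=\res(\Psi_d,\Psi_n)$ since $\varphi(n)$ is even for $n\geq3$, and the divisor $d=1$ contributes trivially because $\Psi_1=1$, which is precisely why the prime-power exponent is $\sum_{j=1}^{\alpha-1}\varphi(p^j)=p^{\alpha-1}-1$ rather than $p^{\alpha-1}$, combining with $\Phi_n(1)^2=p^2$ to yield $p^{p^{\alpha-1}+1}$, while in the general case $\sum_{\beta\geq1}\varphi(n/p^\beta)=p^{a-1}\varphi(m)=\varphi(n)/(p-1)$ for $n=p^am$ with $\gcd(p,m)=1$, matching your stated values. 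As for what each approach buys: the paper's ratio trick funnels all prime structure into one sine product and requires no resultant input, whereas your decomposition exposes a structural identity (the discriminant of a factor of $F_n$ equals derivative data of $F_n$ divided by resultants with the cofactors) and, granted the resultant theorem, reduces the discriminant to an easy Euler-$\varphi$ sum --- at the cost of either depending on that theorem or redoing comparable sine-product bookkeeping yourself.
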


Motivated by Lehmer \cite{lehmer} who determined the resultant of two cyclotomic polynomials, we obtain the following theorem.

\begin{theorem}\label{thm:homoresultant}
Let $2\leq m<n$ be integers. Then the resultant of $\Psi_m(x,y)$ and $\Psi_n(x,y)$ with respect to $x$ is given by
$$\begin{cases}
p^{\varphi(m)}y^{\varphi(m)\varphi(n)}&\text{if $n/m=p^\alpha$ for some prime $p$ and some positive integer $\alpha$};\\
y^{\varphi(m)\varphi(n)}&\text{otherwise}.
\end{cases}$$
\end{theorem}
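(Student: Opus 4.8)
The plan is to strip off the homogenizing variable first and then identify the univariate fibotomic resultant with the classical cyclotomic one. Since $\Psi_m(x)$ and $\Psi_n(x)$ are monic, so are $\Psi_m(x,y)$ and $\Psi_n(x,y)$ as polynomials in $x$; writing $\Psi_k(x,y)=\prod_i(x-r_iy)$ where the $r_i$ are the roots of $\Psi_k(x)$, the product-over-roots formula for the resultant gives
\[
\res_x\big(\Psi_m(x,y),\Psi_n(x,y)\big)=y^{\varphi(m)\varphi(n)}\,\res\big(\Psi_m(x),\Psi_n(x)\big).
\]
Thus it suffices to show that $\res(\Psi_m(x),\Psi_n(x))$ equals $p^{\varphi(m)}$ when $n/m=p^\alpha$ and $1$ otherwise, after which the displayed factor $y^{\varphi(m)\varphi(n)}$ reproduces the theorem.

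First I would pass to the Binet form. Let $\alpha,\beta$ be the roots of $t^2-xt-1$, so $\alpha\beta=-1$, and put $\rho:=\alpha/\beta$; then $F_d(x)=\frac{\alpha^d-\beta^d}{\alpha-\beta}=\beta^{d-1}\frac{\rho^d-1}{\rho-1}$. Möbius-inverting $F_n=\prod_{d\mid n}\Psi_d$ gives $\Psi_n(x)=\prod_{d\mid n}F_d(x)^{\mu(n/d)}$, and since $\sum_{d\mid n}\mu(n/d)=0$ while $\sum_{d\mid n}d\,\mu(n/d)=\varphi(n)$ for $n>1$, the $(\alpha-\beta)$ and $(\rho-1)$ factors all cancel, leaving $\Psi_n(x)=\beta^{\varphi(n)}\prod_{d\mid n}(\rho^d-1)^{\mu(n/d)}$. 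The structural fact I would use is that $x$ is a root of $\Psi_m$ exactly when $\rho$ is a primitive $m$-th root of unity, a bijection between the $\varphi(m)$ roots of $\Psi_m$ and the primitive $m$-th roots. Evaluating $\res(\Psi_m,\Psi_n)=\prod_{\Psi_m(x)=0}\Psi_n(x)$ and pulling out the prefactor $\prod\beta^{\varphi(n)}$ (a root of unity one checks equals $1$ throughout $2\le m<n$), I obtain $\res(\Psi_m,\Psi_n)=\prod_{d\mid n}T(m,d)^{\mu(n/d)}$, where $T(m,d):=\prod_{\rho}(\rho^d-1)$ with $\rho$ ranging over primitive $m$-th roots. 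Recognizing $T(m,d)=\res(\Phi_m(X),X^d-1)=\prod_{c\mid d}\res(\Phi_m,\Phi_c)$ rewrites each building block as a product of cyclotomic resultants.

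Next I would interchange the two products and read off the exponent of each $\res(\Phi_m,\Phi_c)$: for $c\mid n$ it is $\sum_{d:\,c\mid d\mid n}\mu(n/d)=[c=n]$, and for $c\nmid n$ the factor never appears. Hence every factor cancels except $c=n$, giving the clean identity $\res(\Psi_m(x),\Psi_n(x))=\res(\Phi_m,\Phi_n)$. The theorem then follows at once from Lehmer's determination of the resultant of two cyclotomic polynomials \cite{lehmer}, which yields $\res(\Phi_m,\Phi_n)=p^{\varphi(m)}$ when $n/m=p^\alpha$ and $1$ otherwise, combined with the factor from the first step.

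The hard part is that the collapse just described is literally valid only when $m\nmid n$, whereas the entire prime-power regime $n/m=p^\alpha$ has $m\mid n$; there $\rho^d-1$ vanishes for every $d$ divisible by $m$ (equivalently the degenerate factor $\res(\Phi_m,\Phi_m)=0$ appears), and since $\sum_{d:\,m\mid d\mid n}\mu(n/d)=0$ these zeros enter with total exponent $0$, producing a genuine $0^0$-type indeterminacy rather than an honest product. Resolving it requires evaluating $\Psi_n$ at a root of $\Psi_m$ as an actual limit: one expands each vanishing $F_d$ to first order near the root and shows the combined residue is finite, with the prime emerging as $\prod_{e\mid(n/m)}e^{\,\mu((n/m)/e)}=e^{\Lambda(n/m)}$, the exponential of the von Mangoldt function, which is $p$ exactly when $n/m=p^\alpha$ and $1$ otherwise (so the same analysis covers the non-prime-power multiples too). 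Carrying out this local computation cleanly, and tracking the accompanying powers of $i$ so that the root-of-unity prefactor is confirmed to be $1$, is where the real work lies; away from the degenerate case the argument is the routine Möbius bookkeeping above.
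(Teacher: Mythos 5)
Your strategy is workable and genuinely different from the paper's, but as written it is incomplete in exactly the prime-power case $n/m=p^\alpha$, which is the substantive case of the theorem. Your first step (extracting $y^{\varphi(m)\varphi(n)}$ from the homogenization) matches the paper's own reduction in Section~\ref{sec:intro}, and your target identity $\res(\Psi_m(x),\Psi_n(x))=\res(\Phi_m(x),\Phi_n(x))$ is precisely the paper's Theorem~\ref{thm:Psi/Phires}. The paper proves that identity uniformly, with no degenerate case, by a trigonometric computation: both resultants are written as products of root differences, $2i\cos\frac{s\pi}{m}-2i\cos\frac{t\pi}{n}$ is converted to a product of sines via \eqref{eqn:trigcos-cos}, the exponential prefactor on the cyclotomic side is evaluated through \eqref{eqn:s/m+t/n}, and the surviving ratio of $\sin\left(\frac{s\pi}{2m}+\frac{t\pi}{2n}\right)$ against $\cos\left(\frac{s\pi}{2m}-\frac{t\pi}{2n}\right)$ cancels under $s\mapsto m-s$ using \eqref{eqn:trigcomplementary}. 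Your M\"{o}bius expansion $\prod_{d\mid n}(\rho^d-1)^{\mu(n/d)}$, by contrast, is invalid whenever $m\mid n$, and your proposed repair is only half executed. The von Mangoldt evaluation $\prod_{e\mid(n/m)}e^{\mu((n/m)/e)}=e^{\Lambda(n/m)}$ per root is correct, and the vanishing $(\rho-\rho_0)$ powers do cancel since $\sum_{m\mid d\mid n}\mu(n/d)=0$; but after extracting this, the non-vanishing factors $\prod_{m\nmid d\mid n}(\rho_0^d-1)^{\mu(n/d)}$ remain. Taking the product over all $\varphi(m)$ roots, these contribute $\prod_c\res(\Phi_m(x),\Phi_c(x))^{E(c)}$ with $E(c)=\sum_{c\mid d\mid n,\;m\nmid d}\mu(n/d)=[c=n]-[\mathrm{lcm}(c,m)=n]$, which is $-1$ precisely for those $c\neq n$ with $\mathrm{lcm}(c,m)=n$; showing this residual product equals $1$ requires a further application of Theorem~\ref{thm:Phires} to pairs where $m\nmid c$ and $c\nmid m$, an accounting your sketch omits entirely. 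Likewise your prefactor claim is asserted rather than checked, though it is true: taking $\beta=i\zeta_{2m}^{-s}$ at the root $2i\cos\frac{s\pi}{m}$ with $\zeta_{2m}=e^{\pi i/m}$, identity \eqref{eqn:sums} gives $\prod_s\beta_s=i^{\varphi(m)}e^{-\pi i\varphi(m)/2}=1$.

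The larger point is that the $0^0$ difficulty you identify as the hard part is self-inflicted and avoidable. Rather than expanding $\Phi_n(\rho)$ as a M\"{o}bius product at each evaluation, establish the closed identity $\Psi_n(x)=\Phi_n(-\omega^2)/\omega^{\varphi(n)}$ for $n\geq3$ once and for all (the paper's Theorem~\ref{thm:Psi=Phi}, whose continuity argument absorbs the degeneracy a single time, at the level of the identity itself). At a root of $\Psi_m$ one may take $\omega=i\zeta_{2m}^s$ with $\gcd(s,m)=1$, so that $-\omega^2=\zeta_m^s$ and $\Psi_n$ evaluates to $\Phi_n(\zeta_m^s)\big/(i\zeta_{2m}^s)^{\varphi(n)}$, a finite nonzero value even when $m\mid n$, since $\zeta_m^s$ is never a primitive $n$-th root of unity for $m<n$. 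Multiplying over $s$ then yields $\res(\Psi_m(x),\Psi_n(x))=\res(\Phi_m(x),\Phi_n(x))$ in all cases simultaneously, and Lehmer's Theorem~\ref{thm:Phires} — whose prime-power case already contains the arithmetic you were reconstructing by hand — finishes the proof; note that $n>m\geq2$ forces $n\geq3$, so the sign anomaly at $n=2$ in Theorem~\ref{thm:Psi=Phi} never intervenes. With that amendment your algebraic route becomes a legitimate alternative to the paper's trigonometric one; as submitted, however, the case the theorem is mainly about rests on an unexecuted limit computation.
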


Parallel to Guerrier's work \cite{guerrier} on completely determining the factorization form of $\Phi_n(x)$ in $\Z_p[x]$, we obtain the following theorem, which expands the study of Kitayama and Shiomi.

\begin{theorem}\label{thm:homofactorizationform}
Let $m$ be a positive integer such that $\gcd(p,m)=1$, and let $n=p^km$, where $k$ is a nonnegative integer.
\begin{itemize}
\item If $m=1$, then $\Psi_n(x,y)$ factors in $\Z_p[x]$ as $\left(x^2+4y^2\right)^{\frac{\varphi(p^k)}{2}}$.
\item If $m=2$, then $\Psi_n(x,y)$ factors in $\Z_p[x]$ as $x^{\varphi(p^k)}$.
\item If $m\geq3$ and $p>2$, then let $\delta$ be defined as in Theorem~$\ref{thm:fullchar}$. In this case, $\Psi_n(x,y)$ factors in $\Z_p[x,y]$ as a product of $\varphi(m)/\delta$ distinct irreducible monic polynomials of degree $\delta$, each raised to the $\varphi(p^k)$-th power.
\item If $m\geq3$ and $p=2$, then let $\delta$ be defined as in Theorem~$\ref{thm:fullchar2}$. In this case, $\Psi_n(x,y)$ factors in $\Z_p[x,y]$ as a product of $\varphi(m)/(2\delta)$ distinct irreducible monic polynomials of degree $\delta$, each raised to the $2\varphi(p^k)$-th power.
\end{itemize}
\end{theorem}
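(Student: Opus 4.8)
The plan is to treat the base case $k=0$ as known---it is exactly Theorem~\ref{thm:fullchar} (for $p>2$) and Theorem~\ref{thm:fullchar2} (for $p=2$)---and to reduce the general case $n=p^km$ to it by proving a lifting congruence that expresses $\Psi_{p^km}(x,y)$ modulo $p$ as an explicit power of $\Psi_m(x,y)$. Throughout I would work with the homogenized polynomials over $\Z_p$ and use the Binet form $F_n=\frac{\alpha^n-\beta^n}{\alpha-\beta}$ with $\alpha+\beta=x$ and $\alpha\beta=-1$, together with the Lucas polynomial $L_n=\alpha^n+\beta^n$ and the classical identities $F_{2n}=F_nL_n$ and $L_n^2-(x^2+4)F_n^2=4(-1)^n$, which themselves follow from the Binet form.

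First I would establish reduction identities at the level of Fibonacci polynomials. For odd $p$, writing $A=\alpha^n$ and $B=\beta^n$ gives $F_{pn}=F_n\cdot\frac{A^p-B^p}{A-B}$, and $\frac{A^p-B^p}{A-B}$ is a symmetric integer polynomial in $A+B=L_n$ and $AB=(-1)^n$. The characteristic-$p$ identity $A^p-B^p\equiv(A-B)^p$ then gives $\frac{A^p-B^p}{A-B}\equiv(A-B)^{p-1}=\big(L_n^2-4(-1)^n\big)^{(p-1)/2}=\big((x^2+4)F_n^2\big)^{(p-1)/2}\pmod p$, whence the homogenized congruence $F_{pn}(x,y)\equiv(x^2+4y^2)^{(p-1)/2}F_n(x,y)^p\pmod p$. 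Iterating $k$ times yields $F_{p^km}(x,y)\equiv(x^2+4y^2)^{(p^k-1)/2}F_m(x,y)^{p^k}\pmod p$. For $p=2$, the identity $L_n^2-(x^2+4)F_n^2=4(-1)^n$ reduces modulo $2$ to $L_n^2\equiv(xF_n)^2$, so $L_n\equiv xF_n$ and $F_{2n}=F_nL_n\equiv xF_n^2$; iterating gives $F_{2^km}(x,y)\equiv x^{2^k-1}F_m(x,y)^{2^k}\pmod 2$.

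Next I would pass from $F$ to $\Psi$ by M\"obius inversion. For $k\geq1$ and $\gcd(p,m)=1$, collecting the nonzero terms of $\Psi_{p^km}=\prod_{d\mid p^km}F_d^{\mu(p^km/d)}$ yields $\Psi_{p^km}=\prod_{e\mid m,\,m/e\text{ squarefree}}\big(F_{p^ke}/F_{p^{k-1}e}\big)^{\mu(m/e)}$. Substituting the reduction identities, each ratio becomes $(x^2+4y^2)^{\varphi(p^k)/2}F_e^{\varphi(p^k)}$ for odd $p$ (respectively $x^{\varphi(2^k)}F_e^{\varphi(2^k)}$ for $p=2$), so the exponent of the extra factor collapses to $\tfrac{\varphi(p^k)}{2}\sum_{d\mid m}\mu(d)$, which vanishes for $m\geq2$. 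Thus for $m\geq2$ I obtain the clean congruence $\Psi_{p^km}(x,y)\equiv\Psi_m(x,y)^{\varphi(p^k)}\pmod p$, while $m=1$ returns $(x^2+4y^2)^{\varphi(p^k)/2}$ and $m=2$ (necessarily $p>2$) returns $\Psi_2^{\varphi(p^k)}=x^{\varphi(p^k)}$, settling the first two bullet points. For $m\geq3$ and $p>2$ I would invoke Theorem~\ref{thm:fullchar}, which factors $\Psi_m$ over $\Z_p$ into $\varphi(m)/\delta$ distinct monic irreducibles of degree $\delta$, separable since $p\nmid2m$ as guaranteed by Theorem~\ref{thm:homodiscriminant}; raising to the $\varphi(p^k)$ power then gives the stated factorization.

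The delicate point, which I would handle last, is the case $m\geq3$, $p=2$. Here the lifting congruence reads $\Psi_{2^km}\equiv\Psi_m^{\varphi(2^k)}\pmod 2$, whose exponent $\varphi(2^k)=2^{k-1}$ appears to be half of the $2\varphi(2^k)=2^k$ demanded by the theorem. The resolution is that Theorem~\ref{thm:fullchar2} already exhibits $\Psi_m$ over $\Z_2$ as a perfect square---a product $\prod_i g_i^2$ of $\varphi(m)/(2\delta)$ distinct degree-$\delta$ irreducibles each squared, forced by its discriminant being even via Theorem~\ref{thm:homodiscriminant}---so that $\Psi_m^{\varphi(2^k)}=\big(\prod_i g_i^2\big)^{2^{k-1}}=\prod_i g_i^{2^k}$ does raise each $g_i$ to the $2\varphi(2^k)$ power. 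I expect this reconciliation of exponents in characteristic $2$, together with the careful bookkeeping of which divisors survive the M\"obius sum, to be the main obstacle; the remaining degree counts are routine consistency checks against $\deg\Psi_{p^km}=\varphi(p^k)\varphi(m)$.
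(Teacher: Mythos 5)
Your overall architecture coincides with the paper's: establish the lifting congruence $\Psi_{p^km}(x,y)\equiv\Psi_m(x,y)^{\varphi(p^k)}\pmod p$ (the paper's Theorem~\ref{thm:Psi_p^km}, with Theorem~\ref{thm:Psi_p^k} for $m=1$) and then combine it with Theorems~\ref{thm:fullchar} and~\ref{thm:fullchar2}. Your proof of the lifting step is genuinely different and correct: the paper derives it from cyclotomic identities transported through $\omega$ (Theorem~\ref{thm:Psi=Phi} and Theorem~\ref{thm:Psi_pm}), whereas you obtain $F_{pn}\equiv(x^2+4)^{(p-1)/2}F_n^p\pmod p$ directly from the Binet form and the Frobenius endomorphism, and push it through the M\"obius product; your bookkeeping (only $d=p^{k-1}e$ and $d=p^ke$ survive, and $\sum_{d\mid m}\mu(d)=0$ for $m\geq2$) is right, and it even delivers the $m=1$ case $(x^2+4y^2)^{\varphi(p^k)/2}$ uniformly, which the paper handles by a separate computation. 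For $m\geq3$ and $p>2$, replacing the paper's appeal to Lemma~\ref{lem:st} (distinctness of the reduced roots) by the observation that the discriminant is a nonzero residue modulo $p$ because $p\nmid 2m$ is also legitimate, though the correct citation is Corollary~\ref{cor:discriminant} (proved independently in Section~\ref{sec:discriminant}), not Theorem~\ref{thm:homodiscriminant}, to avoid any appearance of circularity.

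The genuine gap sits exactly where you flagged the delicacy, $p=2$ and $m\geq3$, and your resolution does not work as stated. Theorem~\ref{thm:fullchar2} does not ``exhibit $\Psi_m$ over $\Z_2$ as a perfect square''; it only computes the degree $\delta$ of the minimal polynomial of each reduced root. And evenness of the discriminant cannot force the structure $\prod_i g_i^2$: $\Delta(\Psi_m(x))\equiv0\pmod 2$ says only that $\Psi_m$ modulo $2$ has \emph{some} repeated factor -- it is compatible with, say, one irreducible factor cubed and the rest simple -- so it yields neither ``every factor has multiplicity exactly $2$'' nor the count $\varphi(m)/(2\delta)$. What is needed, and what the paper supplies via Lemma~\ref{lem:stp=2}, is a multiplicity computation on the roots: in characteristic $2$ one has $\zeta_4=1$ and $\zeta_{2m}=\zeta_m$, so the $\varphi(m)$ roots of $\Psi_m$ reduce to $\zeta_m^s+\zeta_m^{-s}$ with $1\leq s\leq m$ and $\gcd(s,m)=1$, and Lemma~\ref{lem:stp=2} shows two such reduced roots coincide if and only if $s\equiv\pm t\pmod m$, i.e., exactly in the pairs $\{s,m-s\}$, which have two distinct elements since $m$ is odd. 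Hence every distinct root has multiplicity exactly two, so $\Psi_m\equiv h^2\pmod 2$ with $h$ separable, and only then does Theorem~\ref{thm:fullchar2} give $h=\prod_i g_i$ with $\varphi(m)/(2\delta)$ distinct irreducibles of degree $\delta$, reconciling your exponent $\varphi(2^k)$ with the stated $2\varphi(2^k)$. With that one repair, your argument is complete.
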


In this article, we will first focus on the fibotomic polynomials, and with a simple homogenization process, the results in Corollary~\ref{cor:discriminant}, Corollary~\ref{cor:resultant}, and Theorem~\ref{thm:factorizationform} can be generalized to Theorems~\ref{thm:homodiscriminant}, \ref{thm:homoresultant}, and \ref{thm:homofactorizationform}, respectively. For instance, note that all the roots of $\Psi_n(x,y)$ with respect to $x$ are the same as the roots of $\Psi_n(x)$ except that they have an extra factor $y$. Thus, the discriminant of $\Psi_n(x,y)$ with respect to $x$ is $y^{\varphi(n)(\varphi(n)-1)}\Delta(\Psi_n(x))$, and the resultant of $\Psi_m(x,y)$ and $\Psi_n(x,y)$ with respect to $x$ is $y^{\varphi(m)\varphi(n)}\res(\Psi_m(x),\Psi_n(x))$.

\section{Notation and preliminary results}\label{sec:prelims}

Let $\Phi_n(x)$ denote the $n$-th cyclotomic polynomial, and recall that the roots of $\Phi_n(x)$ are the primitive $n$-th roots of unity.  We denote an arbitrary primitive $k$-th root of unity by $\zeta_k$.  Then 
$$\Phi_n(x)=\prod_{\substack{1\leq s\leq n\\ \gcd(n,s)=1}}\left(x-e^{2\pi is/n}\right)=\prod_{\substack{1\leq s\leq n\\ \gcd(n,s)=1}}(x-\zeta_n^s).$$
Levy provided the root form for $\Psi_n(x)$ when $n\geq 2$:
$$\Psi_n(x)=\prod_{\substack{1\leq s\leq n\\ \gcd(s,n)=1}}\left(x-2i\cos\frac{\pi s}{n}\right)=\prod_{\substack{1\leq s\leq n\\ \gcd(s,n)=1}}\left(x-\zeta_4(\zeta_{2n}^s+\zeta_{2n}^{-s})\right),
$$
where the second equality follows from $2i\cos\frac{s\pi}{n}=i(e^{2\pi is/n}+e^{-2\pi is/n})$.

The following well-known theorems for cyclotomic polynomials will be useful in our study.

\begin{theorem}\label{thm:cyclotomicconst}
Let $n\geq2$ be an integer.  Then
$$\Phi_n(1)=\begin{cases}
p&\text{if $n=p^\alpha$ for some prime $p$ and some positive integer $\alpha$};\\
1&\text{otherwise}.
\end{cases}$$
\end{theorem}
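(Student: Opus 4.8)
The plan is to prove this by strong induction on $n$, using the fundamental identity $x^n-1=\prod_{d\mid n}\Phi_d(x)$. Since $\Phi_1(x)=x-1$, dividing both sides by $x-1$ produces the quotient $1+x+x^2+\cdots+x^{n-1}=\prod_{d\mid n,\,d>1}\Phi_d(x)$, and evaluating at $x=1$ yields the key relation
$$n=\prod_{\substack{d\mid n\\ d>1}}\Phi_d(1).$$
Everything will follow from analyzing this single product and splitting it according to which divisors are prime powers.

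First I would handle the prime-power case $n=p^\alpha$. Here the divisors $d>1$ of $n$ are exactly $p,p^2,\dots,p^\alpha$. When $\alpha=1$ the relation reads $p=\Phi_p(1)$ directly, giving the base case. When $\alpha\geq2$, the induction hypothesis gives $\Phi_{p^j}(1)=p$ for each $1\leq j\leq\alpha-1$, so the relation becomes $p^\alpha=p^{\alpha-1}\,\Phi_{p^\alpha}(1)$, forcing $\Phi_{p^\alpha}(1)=p$ as claimed.

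Next I would treat the case where $n$ has at least two distinct prime factors. The idea is to partition the divisors $d\mid n$ with $d>1$ into the prime-power divisors and the remaining (non-prime-power) divisors. Every non-prime-power divisor $d$ satisfies $d<n$, so by the induction hypothesis $\Phi_d(1)=1$ and these contribute nothing. Each prime-power divisor $p^j<n$ contributes $\Phi_{p^j}(1)=p$. The crucial observation is that the prime-power contributions already multiply to $n$: for each prime $p\mid n$, if $p^a$ is the exact power of $p$ dividing $n$, then the divisors $p,p^2,\dots,p^a$ contribute $p^a$, and the product of these over all primes $p\mid n$ recovers $n$. Hence the relation collapses to $n=n\cdot\Phi_n(1)$, giving $\Phi_n(1)=1$.

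This argument is essentially careful bookkeeping rather than a deep result, so I do not expect a serious obstacle. The one point meriting attention is the clean split of the divisor product together with the recognition that the prime-power terms alone account for the full factor of $n$; once this is in place, the value of $\Phi_n(1)$ is pinned down uniquely in each case, and the induction closes.
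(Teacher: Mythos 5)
Your proof is correct, but there is nothing in the paper to compare it against: the paper lists this statement among the ``well-known theorems for cyclotomic polynomials'' in Section~\ref{sec:prelims} and gives no proof, using it only as a black box (e.g.\ in the evaluation of $\Psi_n(0)$). Your argument --- strong induction on $n$ starting from $x^n-1=\prod_{d\mid n}\Phi_d(x)$, dividing out $\Phi_1(x)=x-1$ to get $n=\prod_{d\mid n,\,d>1}\Phi_d(1)$, then telescoping $p^\alpha=p^{\alpha-1}\Phi_{p^\alpha}(1)$ in the prime-power case and observing that the prime-power divisors alone already contribute $\prod_{p\mid n}p^{a_p}=n$ in the multi-prime case --- is the standard self-contained route and closes correctly in both cases; the final relation $n=n\cdot\Phi_n(1)$ pins down $\Phi_n(1)=1$ since $n\neq0$. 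One sentence does need repair: you assert that \emph{every} non-prime-power divisor $d$ satisfies $d<n$, but when $n$ has at least two distinct prime factors, $n$ itself is a non-prime-power divisor of $n$, so the inductive hypothesis does not apply to it. Your concluding equation $n=n\cdot\Phi_n(1)$ shows you in fact set $d=n$ aside and applied induction only to \emph{proper} non-prime-power divisors, so this is a wording slip rather than a genuine gap; just insert ``proper'' (equivalently, restrict to $1<d<n$) and the induction is airtight. As a side remark, had you wanted to lean on the paper's stated background, its Theorem~\ref{thm:cyclotomicidentity} gives $\Phi_{p^\alpha}(x)=\Phi_p\bigl(x^{p^{\alpha-1}}\bigr)$, which dispatches the prime-power case in one line, but your version has the advantage of requiring nothing beyond the defining product formula.
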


\begin{theorem}\label{thm:cyclotomicidentityp=2}
Let $m\geq3$ be an odd integer.  Then 
$$\Phi_{2m}(x)=\Phi_m(-x).$$
\end{theorem}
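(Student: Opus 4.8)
The plan is to prove the identity by comparing the two sides as monic polynomials of equal degree that share the same set of roots, using the root form for the cyclotomic polynomials recorded above. First I would observe that since $m$ is odd we have $\varphi(2m)=\varphi(m)$, so both $\Phi_{2m}(x)$ and $\Phi_m(-x)$ have degree $\varphi(m)$; moreover $\Phi_m(-x)$ is monic because $\varphi(m)$ is even for $m\geq3$, which makes its leading coefficient $(-1)^{\varphi(m)}=1$. Because all of the roots of unity involved are simple, it then suffices to show that the two polynomials have identical root sets.

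Next I would describe the two root sets explicitly. The roots of $\Phi_{2m}(x)$ are $\zeta_{2m}^s=e^{\pi is/m}$ as $s$ ranges over residues with $\gcd(s,2m)=1$; since $m$ is odd this is exactly the set of $s$ that are odd and coprime to $m$. On the other hand, $x$ is a root of $\Phi_m(-x)$ precisely when $-x=\zeta_m^t$ for some $t$ with $\gcd(t,m)=1$, that is, when $x=-e^{2\pi it/m}=e^{\pi i(m+2t)/m}$. Thus both root sets consist of numbers of the form $e^{\pi ir/m}$, and the identity reduces to a statement about the exponents $r$ modulo $2m$.

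The key step is then to verify that $t\mapsto m+2t\pmod{2m}$ is a bijection from the residues coprime to $m$ onto the odd residues coprime to $m$ modulo $2m$. Here $m+2t$ is automatically odd, and since $m$ is odd one has $\gcd(m+2t,m)=\gcd(2t,m)=\gcd(t,m)$, so the coprimality condition transfers exactly; bijectivity follows because $2$ is invertible modulo $m$, allowing one to recover $t$ from $s$ via $2t\equiv s-m\pmod{2m}$. This matching of exponents gives equality of the root sets and hence of the two polynomials.

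I expect the main (and only minor) obstacle to be the bookkeeping in this bijection, namely checking that the conditions $\gcd(s,2m)=1$ and $\gcd(t,m)=1$ correspond correctly and that the parity is tracked properly; the oddness of $m$ is precisely what makes everything align. As an alternative I would keep in mind the inductive route based on $\prod_{d\mid2m}\Phi_d(x)=x^{2m}-1$, which yields $\prod_{d\mid m}\Phi_{2d}(x)=x^m+1$ and lets one peel off the factor $\Phi_{2m}(x)$. That approach, however, requires carefully tracking a global sign arising from the $d=1$ term $\Phi_2(x)=-\Phi_1(-x)$, so I find the direct comparison of roots cleaner.
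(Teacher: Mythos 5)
Your proof is correct. There is nothing in the paper to compare it against: the paper records Theorem~\ref{thm:cyclotomicidentityp=2} in Section~\ref{sec:prelims} as a well-known cyclotomic identity and gives no proof, so your argument supplies one rather than diverging from one. Your route is the standard direct verification, and each step checks out: both sides are monic of degree $\varphi(2m)=\varphi(m)$ (and you correctly flagged the one easily forgotten detail, that the leading coefficient of $\Phi_m(-x)$ is $(-1)^{\varphi(m)}=1$ because $\varphi(m)$ is even for $m\geq3$); the roots of $\Phi_{2m}$ are $e^{\pi i s/m}$ with $s$ odd and coprime to $m$, while the roots of $\Phi_m(-x)$ are $e^{\pi i(m+2t)/m}$ with $\gcd(t,m)=1$; and the map $t\mapsto m+2t\pmod{2m}$ does exactly what you claim, since $m+2t$ is odd, $\gcd(m+2t,m)=\gcd(2t,m)=\gcd(t,m)$ as $m$ is odd, and invertibility of $2$ modulo $m$ gives the inverse $t\equiv(s-m)/2\pmod m$. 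One micro-point you left implicit but which is harmless: the map is well defined from residues modulo $m$ to residues modulo $2m$ because replacing $t$ by $t+m$ shifts $m+2t$ by $2m$; your recovery formula already forces this. Since both polynomials are separable, monic, of equal degree, and share this common root set, they coincide. Your alternative inductive sketch via $\prod_{d\mid m}\Phi_{2d}(x)=x^m+1$ is also sound, with the global sign absorbed by the $d=1$ term $\Phi_2(x)=-\Phi_1(-x)$ exactly as you say; the root-set comparison is indeed the cleaner of the two.
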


\begin{theorem}\label{thm:cyclotomicidentity}
Let $p$ be a prime and $m$ be a positive integer. Then
\begin{equation}\label{eqn:Phi_pm,p|m}
\Phi_{pm}(x)=\Phi_m(x^p)\qquad\text{if }p\mid m
\end{equation}
and
$$\Phi_{pm}(x)=\frac{\Phi_m(x^p)}{\Phi_m(x)}\qquad\text{if }p\nmid m.$$
\end{theorem}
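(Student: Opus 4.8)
The plan is to argue directly from the root form of the cyclotomic polynomials, which matches the approach taken throughout the paper. Recall that $\Phi_n(x)=\prod_{\ord(\zeta)=n}(x-\zeta)$, where the product runs over all complex $\zeta$ of multiplicative order exactly $n$, and that $\deg\Phi_n=\varphi(n)$. The entire argument rests on understanding the roots of the composed polynomial $\Phi_m(x^p)$.

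First I would show that $\Phi_m(x^p)$ is separable and identify its root set. A number $\alpha\in\C$ is a root of $\Phi_m(x^p)$ exactly when $\alpha^p$ is a primitive $m$-th root of unity. For each of the $\varphi(m)$ primitive $m$-th roots of unity $\omega$, the equation $\alpha^p=\omega$ has precisely $p$ distinct solutions, and solutions arising from different $\omega$ are distinct; hence $\Phi_m(x^p)$ has exactly $p\varphi(m)$ distinct roots, which equals its degree. Consequently $\Phi_m(x^p)=\prod_\alpha(x-\alpha)$ over this root set, with no repeated factors.

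The key step is then to compute the order of each such root $\alpha$ and to sort the linear factors accordingly. Writing $d=\ord(\alpha)$, the relation $\ord(\alpha^p)=d/\gcd(d,p)=m$ splits into two cases: if $p\nmid d$ then $d=m$, and if $p\mid d$ then $d=pm$. When $p\mid m$, the first case is impossible (it would force $p\mid m=d$ against $p\nmid d$), so every root has order $pm$ and $\Phi_m(x^p)=\prod_{\ord(\alpha)=pm}(x-\alpha)=\Phi_{pm}(x)$. When $p\nmid m$, both cases occur: the roots of order $m$ contribute exactly $\Phi_m(x)$ and those of order $pm$ contribute exactly $\Phi_{pm}(x)$, so that $\Phi_m(x^p)=\Phi_m(x)\Phi_{pm}(x)$, which rearranges to the claimed identity. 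A degree tally confirms consistency in each case, since $\varphi(pm)=p\varphi(m)$ when $p\mid m$ and $\varphi(m)+\varphi(pm)=p\varphi(m)$ when $p\nmid m$.

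The only delicate point is the order computation $\ord(\alpha^p)=\ord(\alpha)/\gcd(\ord(\alpha),p)$ together with the case analysis on whether $p$ divides $\ord(\alpha)$; everything else is bookkeeping of degrees and roots. An alternative, more mechanical route would induct on $n$ using $x^n-1=\prod_{d\mid n}\Phi_d(x)$ and Möbius inversion, but the root-partition argument above is more transparent and integrates naturally with the root forms already introduced for $\Phi_n(x)$ and $\Psi_n(x)$.
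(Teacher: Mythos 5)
Your proof is correct. Note, however, that the paper itself states Theorem~\ref{thm:cyclotomicidentity} without proof, citing it among the well-known facts about cyclotomic polynomials, so there is no in-paper argument to compare against; your root-partition derivation is the standard one and is a perfectly good way to fill in the details. One point of emphasis: your closing degree tally is not merely a ``consistency check'' but the step that actually completes the argument --- the order computation $\ord(\alpha^p)=\ord(\alpha)/\gcd(\ord(\alpha),p)$ only shows that every root of $\Phi_m(x^p)$ has order $m$ or $pm$, i.e.\ one containment of root sets, and it is the count $\varphi(pm)=p\varphi(m)$ when $p\mid m$ (respectively $\varphi(m)+\varphi(pm)=p\varphi(m)$ when $p\nmid m$) that forces \emph{every} element of the relevant order to occur among the roots, upgrading the containment to equality. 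Alternatively, the reverse containment follows directly from the same order formula applied to a primitive $pm$-th (or, when $p\nmid m$, primitive $m$-th) root $\alpha$: since $\gcd(pm,p)=p$, one has $\ord(\alpha^p)=pm/p=m$, so $\alpha$ is a root of $\Phi_m(x^p)$. Either repair is one line; as written, a reader could object that you asserted rather than proved that the order-$pm$ roots ``contribute exactly $\Phi_{pm}(x)$.''
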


Let $\Delta(f(x))$ denote the discriminant of a polynomial $f$, and let $\res(f(x),g(x))$ denote the resultant of polynomials $f(x)$ and $g(x)$.

\begin{theorem}\label{thm:Phidiscrim}
Let $n$ be a positive integer. Then
$$\Delta(\Phi_n(x))=\frac{(-1)^{\lfloor\varphi(n)/2\rfloor}n^{\varphi(n)}}{\displaystyle\prod_{p\mid n}p^{\varphi(n)/(p-1)}}.$$
\end{theorem}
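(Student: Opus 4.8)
The plan is to reduce the computation to the product of the derivative $\Phi_n'$ over the roots of $\Phi_n$. For a monic polynomial $f$ of degree $d$ with roots $\alpha_1,\dots,\alpha_d$ one has $\Delta(f)=(-1)^{\binom{d}{2}}\prod_{i=1}^{d}f'(\alpha_i)$, and since $\binom{\varphi(n)}{2}\equiv\floor{\varphi(n)/2}\pmod 2$, the sign prefactor is exactly $(-1)^{\floor{\varphi(n)/2}}$. Thus the whole problem becomes the evaluation of $\prod_{\zeta}\Phi_n'(\zeta)$, where $\zeta$ ranges over the primitive $n$-th roots of unity. I would first dispose of the degenerate cases $n\in\{1,2\}$, where $\varphi(n)=1$ and $\Phi_n$ is linear, by direct inspection, and then assume $n\geq3$ so that $\varphi(n)$ is even throughout.

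To get a workable expression for $\Phi_n'(\zeta)$, I would write $x^n-1=\Phi_n(x)\,g(x)$ with $g(x)=\prod_{d\mid n,\,d<n}\Phi_d(x)$. Differentiating and evaluating at a primitive root $\zeta$, where $\Phi_n(\zeta)=0$ and $\zeta^n=1$, gives $n\zeta^{-1}=\Phi_n'(\zeta)\,g(\zeta)$, hence $\Phi_n'(\zeta)=n\zeta^{-1}/g(\zeta)$. Taking the product over all $\zeta$ and using that $\prod_{\zeta}\zeta=(-1)^{\varphi(n)}\Phi_n(0)=1$ for $n\geq3$ (the value $\Phi_n(0)=1$ following from $x^n-1=\prod_{d\mid n}\Phi_d(x)$ evaluated at $0$), I obtain
$$\prod_{\zeta}\Phi_n'(\zeta)=\frac{n^{\varphi(n)}}{\prod_{\zeta}g(\zeta)}=\frac{n^{\varphi(n)}}{\prod_{d\mid n,\,d<n}\res(\Phi_n(x),\Phi_d(x))},$$
since $\prod_{\zeta}\Phi_d(\zeta)=\res(\Phi_n(x),\Phi_d(x))$ for the monic $\Phi_n$.

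It then remains to evaluate the resultants. Invoking Lehmer's formula \cite{lehmer} for the resultant of two cyclotomic polynomials, for a proper divisor $d\mid n$ one has $\res(\Phi_n(x),\Phi_d(x))=p^{\varphi(d)}$ when $n/d$ is a power of a prime $p$, and equals $1$ otherwise, the sign being positive because $\varphi(n)$ is even. Grouping the contributing divisors by the prime $p$ and writing $p^a\parallel n$, the exponent of $p$ in the denominator is $\sum_{j=1}^{a}\varphi(n/p^j)$, and the key arithmetic identity $\sum_{j=1}^{a}\varphi(n/p^j)=\varphi(n)/(p-1)$ collapses the product to exactly $\prod_{p\mid n}p^{\varphi(n)/(p-1)}$. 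Combining the three displays yields the stated formula. I expect the main obstacle to be the bookkeeping of signs — confirming that every resultant and the factor $\prod_{\zeta}\zeta$ is positive so that the entire sign is carried by the prefactor $(-1)^{\floor{\varphi(n)/2}}$ — together with verifying the arithmetic identity, which is the computational heart of the argument.
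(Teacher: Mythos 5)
Your proof is correct; the only caveat in comparing it to the paper is that there is nothing to compare it with, since Theorem~\ref{thm:Phidiscrim} is quoted in Section~\ref{sec:prelims} as well-known background with no proof supplied. Your argument is the standard derivation, and it has the pleasant feature of using only tools the paper already has on hand: it deduces the discriminant formula from Lehmer's resultant formula (Theorem~\ref{thm:Phires}), stated immediately after it. All the delicate points check out. The parity identity $\binom{\varphi(n)}{2}\equiv\lfloor\varphi(n)/2\rfloor\pmod 2$ holds in each residue class of $\varphi(n)$ modulo $4$, so the sign prefactor is right. The value $\Phi_n(0)=1$ for $n\geq2$ follows by induction from $-1=\prod_{d\mid n}\Phi_d(0)$ and $\Phi_1(0)=-1$, so indeed $\prod_\zeta\zeta^{-1}=1$ for $n\geq3$, and your differentiation of $x^n-1=\Phi_n(x)g(x)$ at a primitive root is clean since $g(\zeta)\neq0$. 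The order swap $\res(\Phi_n,\Phi_d)=(-1)^{\varphi(n)\varphi(d)}\res(\Phi_d,\Phi_n)=\res(\Phi_d,\Phi_n)$ is justified exactly as you say because $\varphi(n)$ is even for $n\geq3$, so Lehmer's formula applies with $m=d$; note that the divisor $d=1$ is covered too, where $\res(\Phi_n,\Phi_1)=\Phi_n(1)$ and Lehmer's statement with $\varphi(1)=1$ agrees with Theorem~\ref{thm:cyclotomicconst}. Finally, writing $n=p^am$ with $p\nmid m$, your exponent identity is a one-line computation, $\sum_{j=1}^{a}\varphi(n/p^j)=\varphi(m)\sum_{i=0}^{a-1}\varphi(p^i)=p^{a-1}\varphi(m)=\varphi(n)/(p-1)$, which collapses the denominator to $\prod_{p\mid n}p^{\varphi(n)/(p-1)}$ as claimed. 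Since every resultant is a positive integer, the whole sign is carried by the prefactor, as you anticipated; the argument is complete as written.
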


\begin{theorem}[\cite{lehmer}]\label{thm:Phires}
Let $m<n$ be positive integers. Then
$$\res(\Phi_m(x),\Phi_n(x))=\begin{cases}
p^{\varphi(m)}&\text{if $n/m=p^\alpha$ for some prime $p$ and some positive integer $\alpha$};\\
1&\text{otherwise}.
\end{cases}$$
\end{theorem}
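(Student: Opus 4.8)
The plan is to prove the statement by strong induction on $n$, after rewriting the resultant as a product of values of $\Phi_n$. Since both cyclotomic polynomials are monic, $\res(\Phi_m(x),\Phi_n(x))=\prod_{\zeta}\Phi_n(\zeta)$, where $\zeta$ ranges over the primitive $m$-th roots of unity. I would induct on $n$, assuming the formula for every pair of distinct cyclotomic indices whose larger member is less than $n$, and split according to whether $n/m$ is a prime power.

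For the main case $n=p^\alpha m$ I would use Theorem~\ref{thm:cyclotomicidentity} to trade the index $n$ for a substitution $x\mapsto x^{p}$. If $p\mid m$, iterating the first identity of Theorem~\ref{thm:cyclotomicidentity} gives $\Phi_n(x)=\Phi_m(x^{p^\alpha})$, so $\res(\Phi_m,\Phi_n)=\prod_{\zeta}\Phi_m(\zeta^{p^\alpha})$. As $\zeta$ runs over the primitive $m$-th roots, $\zeta^{p^\alpha}$ runs over the primitive $D$-th roots, $D=m/\gcd(m,p^\alpha)$, each attained $\varphi(m)/\varphi(D)$ times, whence $\res(\Phi_m,\Phi_n)=\res(\Phi_D,\Phi_m)^{\varphi(m)/\varphi(D)}$; since $D\mid m$ with $m/D$ a power of $p$, the inductive hypothesis evaluates the inner resultant as $p^{\varphi(D)}$, giving $p^{\varphi(m)}$. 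If $p\nmid m$, raising to a $p$-power permutes the primitive $m$-th roots, so it suffices to treat $\alpha=1$; here I would start from $\prod_{d\mid m}\Phi_{pd}(x)=(x^{pm}-1)/(x^{m}-1)$ and evaluate the limit as $x\to\zeta$ by L'H\^opital to get $p$, then take the product over all $\zeta$ and use the inductive hypothesis (which forces each factor $\res(\Phi_m,\Phi_{pd})$ with $d<m$ to equal $1$, as neither index divides the other) to isolate $\res(\Phi_m,\Phi_{pm})=p^{\varphi(m)}$.

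For the complementary case I would exploit the telescoping identity obtained from $x^{N}-1=\prod_{d\mid N}\Phi_d(x)$ and multiplicativity of the resultant: $\prod_{d\mid n}\res(\Phi_m,\Phi_d)=\res(\Phi_m,x^{n}-1)=\prod_{\zeta}(\zeta^{n}-1)$. When $m\nmid n$ the right-hand side is nonzero and equals $\bigl((-1)^{\varphi(D)}\Phi_D(1)\bigr)^{\varphi(m)/\varphi(D)}$ with $D=m/\gcd(m,n)$, using the natural surjection $(\Z/m\Z)^\times\to(\Z/D\Z)^\times$ together with Theorem~\ref{thm:cyclotomicconst}; dividing out the factors $\res(\Phi_m,\Phi_d)$ with $d\mid n$, $d<n$, which are known by induction, would leave $\res(\Phi_m,\Phi_n)=1$. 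The case $m\mid n$ with $n/m$ not a prime power I would reduce to the previous computations by peeling one prime of $n$ at a time via Theorem~\ref{thm:cyclotomicidentity}.

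The hard part will be the degenerate evaluations: whenever the relevant exponent makes $\zeta^{(\cdot)}=1$ (in particular whenever $m\mid n$), the naive product of root-values vanishes, so the clean closed form for $\prod_{\zeta}(\zeta^{n}-1)$ breaks down and must be replaced by limiting (L'H\^opital) arguments inside a carefully ordered induction to avoid indeterminate $0/0$ expressions. A secondary bookkeeping point is to confirm that the telescoped quotient really collapses to $1$ in every non-prime-power case and that the accumulated signs $(-1)^{\varphi(m)\sum_{d\mid n}\mu(n/d)}$ reduce to $+1$ for $n\geq2$; both follow from $\sum_{d\mid n}\mu(n/d)=0$, but tracking them through the induction is where the main care is needed.
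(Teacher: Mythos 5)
There is nothing in the paper to compare your argument against: Theorem~\ref{thm:Phires} is imported verbatim from Lehmer \cite{lehmer} as a known preliminary and is used as a black box (together with Theorem~\ref{thm:Psi/Phires}) to deduce Corollary~\ref{cor:resultant}; the paper contains no proof of it. Measured instead against the classical argument (Lehmer's, in the streamlined form later given by Apostol), your outline is essentially a reconstruction of that proof, and its main steps check out. Writing $\res(\Phi_m(x),\Phi_n(x))=\prod_{\zeta}\Phi_n(\zeta)$ over primitive $m$-th roots is correct since both polynomials are monic; in the case $p\mid m$, iterating \eqref{eqn:Phi_pm,p|m} and using that $\zeta\mapsto\zeta^{p^\alpha}$ maps primitive $m$-th roots $\varphi(m)/\varphi(D)$-to-one onto primitive $D$-th roots, $D=m/\gcd(m,p^\alpha)$, does give $\res(\Phi_D,\Phi_m)^{\varphi(m)/\varphi(D)}=p^{\varphi(m)}$ by induction (with the base $D=1$ supplied by $\res(\Phi_1,\Phi_m)=\Phi_m(1)$ and Theorem~\ref{thm:cyclotomicconst}); and your L'H\^opital evaluation $\lim_{x\to\zeta}(x^{pm}-1)/(x^m-1)=p\zeta^{(p-1)m}=p$ is exactly right, after which the factors $\res(\Phi_m,\Phi_{pd})$, $d\mid m$, $d<m$, are $1$ by induction because, as you say, neither index divides the other (if $m\mid pd$ then $m\mid d$ since $\gcd(m,p)=1$).

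Two places in your outline deserve the care you partially flag. First, in the subcase $m\mid n$ with $n/m$ not a prime power, the telescoping identity is genuinely useless, not merely delicate: the factor $\res(\Phi_m,\Phi_m)=0$ makes both sides vanish, so "peeling" is mandatory there, and it needs a three-way split on a prime $p\mid n$: if $p\mid m$, the power map sends primitive $m$-th roots onto primitive $(m/p)$-th roots and the ratio $n/m$ is preserved, so induction applies; if $p\nmid m$ and $p^2\mid n$, one factor of $p$ is shed but $v_p(n/m)\geq2$ keeps the ratio a non-prime-power; and if $p\nmid m$ with $p$ exactly dividing $n$, then $\Phi_n(x)=\Phi_{n/p}(x^p)/\Phi_{n/p}(x)$ makes the two products over $\zeta$ cancel outright, using only that $\res(\Phi_m,\Phi_{n/p})\neq0$, which holds because $n/p\neq m$ (else $n/m=p$). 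Second, in the case $m\nmid n$, the assertion that dividing out the inductively known factors ``would leave $1$'' is precisely where the real work hides: you must verify that those factors multiply to exactly $\pm\Phi_D(1)^{\varphi(m)/\varphi(D)}$, which requires grouping the divisors $d\mid n$ by $\gcd(m,d)$ and invoking the vanishing of M\"obius sums — this is in fact the heart of Apostol's computation, and your closing remark about $\sum_{d\mid n}\mu(n/d)=0$ shows you are on that track, but in an actual write-up this grouping argument must be carried out rather than gestured at. The sign bookkeeping itself is harmless: $\varphi(m)$ is even for $m\geq3$, so all reorderings and all factors $\bigl((-1)^{\varphi(D)}\bigr)^{\varphi(m)/\varphi(D)}$ are $+1$, and $m\in\{1,2\}$ should simply be dispatched by hand. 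With those two points filled in, your proof is complete and is the standard one.
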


\section{Identities involving $\Psi_n(x)$}

For the remainder of this paper, let $\omega=\frac{x+\sqrt{x^2+4}}{2}$. Levy gave a brief explanation for the following theorem. We provide a detailed proof here for completion.

\begin{theorem}\label{thm:Psi=Phi}
Let $n\geq2$ be an integer. Then
$$\Psi_n(x)=\begin{cases}
-\frac{\Phi_n(-\omega^2)}{\omega^{\varphi(n)}}&\text{if }n=2;\\
\frac{\Phi_n(-\omega^2)}{\omega^{\varphi(n)}}&\text{if }n\geq3.
\end{cases}$$
\end{theorem}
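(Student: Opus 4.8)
The plan is to work entirely from the root forms already recorded in this section: Levy's product for $\Psi_n(x)$ and the standard product for $\Phi_n$. Writing $i=\zeta_4$ and substituting $-\omega^2$ into $\Phi_n$, I would distribute one factor of $\omega^{-1}$ across each of the $\varphi(n)$ linear factors to obtain
\[
\frac{\Phi_n(-\omega^2)}{\omega^{\varphi(n)}}=\prod_{\substack{1\le s\le n\\ \gcd(s,n)=1}}\frac{-\omega^2-\zeta_n^s}{\omega}=\prod_{\substack{1\le s\le n\\ \gcd(s,n)=1}}\left(-\omega-\zeta_n^s\omega^{-1}\right).
\]
Since $\omega$ satisfies $\omega^2-x\omega-1=0$, equivalently $\omega-\omega^{-1}=x$, the right-hand side is a Laurent polynomial in $\omega$ over $\mathbb{C}$, and the task is to identify it with $\Psi_n(\omega-\omega^{-1})$. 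The guiding intuition is that at a root $x=2i\cos\frac{\pi s}{n}$ of $\Psi_n$ one has $\omega=ie^{-i\pi s/n}$, so $-\omega^2=\zeta_n^{-s}$ is a primitive $n$-th root of unity; thus $\Phi_n(-\omega^2)$ vanishes exactly where $\Psi_n$ does, which is what makes the stated quotient a genuine polynomial identity.

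For the main case $n\geq3$, note that $\varphi(n)$ is even and the involution $s\mapsto n-s$ permutes the residues coprime to $n$ with no fixed point, since the only candidate $s=n/2$ is never coprime to $n$ when $n\geq3$. I would therefore multiply the factors in matched pairs $\{s,n-s\}$ on both sides. On the cyclotomic side, using $\zeta_n^{n-s}=\zeta_n^{-s}$,
\[
\left(-\omega-\zeta_n^s\omega^{-1}\right)\left(-\omega-\zeta_n^{-s}\omega^{-1}\right)=\omega^2+\omega^{-2}+\zeta_n^s+\zeta_n^{-s}.
\]
On the fibotomic side, using $\zeta_{2n}^n=-1$ to rewrite the $(n-s)$-factor as $x+i(\zeta_{2n}^s+\zeta_{2n}^{-s})$,
\[
\left(x-i(\zeta_{2n}^s+\zeta_{2n}^{-s})\right)\left(x+i(\zeta_{2n}^s+\zeta_{2n}^{-s})\right)=x^2+\left(\zeta_{2n}^s+\zeta_{2n}^{-s}\right)^2.
\]
These two pair products coincide: expanding with $\zeta_{2n}^2=\zeta_n$ gives $\left(\zeta_{2n}^s+\zeta_{2n}^{-s}\right)^2=\zeta_n^s+\zeta_n^{-s}+2$, while $x^2+2=(\omega-\omega^{-1})^2+2=\omega^2+\omega^{-2}$. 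Since every matched pair contributes an identical factor, taking the product over all pairs yields $\frac{\Phi_n(-\omega^2)}{\omega^{\varphi(n)}}=\Psi_n(x)$, as claimed for $n\geq3$.

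The case $n=2$ is where the single substantive subtlety lives: $\varphi(2)=1$ is odd, so the lone coprime residue $s=1$ has no partner under $s\mapsto n-s$, and this missing pairing is exactly what produces the extra minus sign. Here I would simply compute directly from $\Phi_2(x)=x+1$ that $\frac{\Phi_2(-\omega^2)}{\omega}=\frac{1-\omega^2}{\omega}=\omega^{-1}-\omega=-x=-\Psi_2(x)$, matching the stated formula. I expect no serious obstacle beyond this parity bookkeeping; the only genuinely load-bearing identity is $\omega^2+\omega^{-2}=x^2+2$, which transports each cyclotomic pair product (a symmetric Laurent expression in $\omega$) into the corresponding fibotomic pair product (a polynomial in $x$), and one should take minor care to confirm that $s\mapsto n-s$ is genuinely fixed-point-free precisely when $n\geq3$, forcing the isolated sign correction at $n=2$.
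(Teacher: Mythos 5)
Your proof is correct, and it takes a genuinely different route from the paper's. The paper never touches the root products: it applies M\"obius inversion to $x^n-1=\prod_{d\mid n}\Phi_d(x)$ and to $F_n(x)=\prod_{d\mid n}\Psi_d(x)$, reduces the claim to the single identity $\sum_{j=0}^{d-1}(-\omega^2)^j=(-\omega)^{d-1}F_d(x)$, and proves that by induction on $d$ via the Fibonacci recurrence; since the inversion manipulations require $x\neq0$ (and $n\geq3$ for $(-\omega)^{\varphi(n)}=\omega^{\varphi(n)}$), the paper patches $x=0$ by continuity and handles $n=2$ by direct computation, just as you do. You instead match Levy's product for $\Psi_n(x)$ against the cyclotomic product factor-by-factor: after distributing $\omega^{-1}$ you pair $s$ with $n-s$, use $\zeta_{2n}^n=-1$ on the fibotomic side, and observe that each conjugate pair contributes the identical factor $\omega^2+\omega^{-2}+\zeta_n^s+\zeta_n^{-s}$ to both sides via the load-bearing identity $x^2+2=\omega^2+\omega^{-2}$; your bookkeeping is sound (the expansions check, $s\mapsto n-s$ is fixed-point-free on the units precisely when $n\geq3$ because $n/2$ is never coprime to $n$ there, and the lone fixed point at $n=2$ is exactly where $\varphi(n)$ is odd, forcing the sign you verify from $\Phi_2$ directly). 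As for what each approach buys: yours is shorter, dispenses with the $x\neq0$/continuity step entirely (only $\omega\neq0$ is needed), and makes the $n=2$ sign conceptually transparent rather than an artifact of a parity hypothesis; but it leans on Levy's explicit root formula for $\Psi_n(x)$ quoted in Section~\ref{sec:prelims}, which itself encodes knowing the roots of $F_n(x)$, whereas the paper's argument is self-contained modulo the recurrence and the divisibility definitions and incidentally reproduces the Binet-type closed form for $F_d(x)$ along the way.
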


\begin{proof}
If $n=2$, then the statement holds since $\Psi_2(x)=x$ and
$$-\frac{\Phi_2(-\omega^2)}{\omega^{\varphi(2)}}=-\frac{-\omega^2+1}{\omega}=\omega-\omega^{-1}=\frac{x+\sqrt{x^2+4}}{2}-\frac{-x+\sqrt{x^2+4}}{2}=x.$$
Now, assume that $n\geq3$. It is well-known that $\sum_{d\mid n}\mu\left(\frac{n}{d}\right)=0$ for all integers $n\geq2$ and $\sum_{d\mid n}d\mu\left(\frac{n}{d}\right)=\varphi(n)$ for all positive integers $n$. Hence, for every nonzero $k$ that is independent of $d$,
\begin{equation}\label{eqn:mobius1}
\prod_{d\mid n}k^{\mu\left(\frac{n}{d}\right)}=1\text{ for all integers }n\geq2
\end{equation}
and
\begin{equation}\label{eqn:mobiusk^phin}
\prod_{d\mid n}\left(k^d\right)^{\mu\left(\frac{n}{d}\right)}=k^{\varphi(n)}\text{ for all positive integers }n.
\end{equation}

Since $x^n-1=\prod_{d\mid n}\Phi_d(x)$, M\"{o}bius inversion yields that for all real numbers $x\neq1$,
\begin{equation}\label{eqn:Phi}
\Phi_n(x)=\prod_{d\mid n}(x^d-1)^{\mu\left(\frac{n}{d}\right)}=\prod_{d\mid n}\left(\frac{x^d-1}{x-1}\right)^{\mu\left(\frac{n}{d}\right)}=\prod_{d\mid n}\left(\sum_{j=0}^{d-1}x^j\right)^{\mu\left(\frac{n}{d}\right)},
\end{equation}
where the second equality is due to \eqref{eqn:mobius1} by substituting $k=\frac{1}{x-1}$. By substituting $-\omega^2$ into \eqref{eqn:Phi}, we have
\begin{equation}\label{eqn:Phiuseful}
\Phi_n(-\omega^2)=\prod_{d\mid n}\left(\sum_{j=0}^{d-1}(-\omega^2)^j\right)^{\mu\left(\frac{n}{d}\right)}.
\end{equation}
Similarly, since $F_n(x)=\prod_{d\mid n}\Psi_d(x)$, M\"{o}bius inversion yields that for all real numbers $x\neq0$,
\begin{equation}\label{eqn:Psi}
\Psi_n(x)=\prod_{d\mid n}F_d(x)^{\mu\left(\frac{n}{d}\right)}.
\end{equation}
Note that $\varphi(n)$ is even for all integers $n\geq3$. Thus, for all real numbers $x\neq0$, \eqref{eqn:Psi} gives
\begin{equation}\label{eqn:Psiuseful}
\omega^{\varphi(n)}\Psi_n(x)=(-\omega)^{\varphi(n)}\Psi_n(x)=\prod_{d\mid n}((-\omega)^dF_d(x))^{\mu\left(\frac{n}{d}\right)}=\prod_{d\mid n}((-\omega)^{d-1}F_d(x))^{\mu\left(\frac{n}{d}\right)},
\end{equation}
where the second equality is due to \eqref{eqn:mobiusk^phin} and the third equality is due to \eqref{eqn:mobius1} by substituting $k=(-\omega)^{-1}$.

To prove our theorem, we first equate \eqref{eqn:Phiuseful} and \eqref{eqn:Psiuseful} for all real numbers $x\neq0$, and the proof will be complete by noticing that both $\Phi_n(-\omega^2)$ and $\omega^{\varphi(n)}\Psi_n(x)$ are continuous functions with respect to $x$. To equate \eqref{eqn:Phiuseful} and \eqref{eqn:Psiuseful}, it suffices to show that for all positive integers $d$,
\begin{equation}\label{eqn:sum=Fd}
\sum_{j=0}^{d-1}(-\omega^2)^j=(-\omega)^{d-1}F_d(x),
\end{equation}
and we shall proceed by induction.

When $d=1$, \eqref{eqn:sum=Fd} clearly holds since $F_1(x)=1$. When $d=2$, \eqref{eqn:sum=Fd} also holds since $$1-\omega^2=1-\frac{x^2+x^2+4+2x\sqrt{x^2+4}}{4}=-\frac{x^2+x\sqrt{x^2+4}}{2}=-\omega x=-\omega F_2(x).$$
Assuming that \eqref{eqn:sum=Fd} holds for some positive integers $d$ and $d+1$, we have
\begin{align*}
(-\omega)^{d+1}F_{d+2}(x)&=(-\omega)^{d+1}(xF_{d+1}(x)+F_d(x))\\
&=(-\omega x)(-\omega)^dF_{d+1}(x)+(-\omega)^2(-\omega)^{d-1}F_d(x)\\
&=(1-\omega^2)\left(\sum_{j=0}^d(-\omega^2)^j\right)+\omega^2\left(\sum_{j=0}^{d-1}(-\omega^2)^j\right)\\
&=\sum_{j=0}^d(-\omega^2)^j-\omega^2(-\omega^2)^d\\
&=\sum_{j=0}^{d+1}(-\omega^2)^j.
\end{align*}
Therefore, \eqref{eqn:sum=Fd} holds for all positive integers $d$ by induction.
\end{proof}

\begin{remark}\label{rmk:Psi=Phi}
As seen in the proof of 
Theorem~\ref{thm:Psi=Phi} that
\begin{equation}\label{eqn:omega-omegainv}
\omega-\omega^{-1}=x,
\end{equation}
the statement of Theorem~\ref{thm:Psi=Phi} can be rewritten such that for any integer $n\geq2$,
$$\Psi_n(\omega-\omega^{-1})=\begin{cases}
-\frac{\Phi_n(-\omega^2)}{\omega^{\varphi(n)}}&\text{if }n=2;\\
\frac{\Phi_n(-\omega^2)}{\omega^{\varphi(n)}}&\text{if }n\geq3.
\end{cases}$$
\end{remark}

The following two theorems are special cases of several results presented by Sagan and Tirrell.  We provide alternative proofs of these results using Theorem~\ref{thm:Psi=Phi}. Our version allows direct applications in Sections~\ref{sec:discriminant} and \ref{sec:factorize}.

\begin{theorem}
The constant term of the $n$-th fibotomic polynomial is given by
$$\Psi_n(0)=
\begin{cases}
0&\text{ if }n=2;\\
p&\text{ if $n=2p^{\alpha}$ for some prime $p$ and some positive integer $\alpha$};\\
1&\text{ otherwise}.
\end{cases}$$
\end{theorem}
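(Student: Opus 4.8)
The plan is to reduce the constant term to a single evaluation of a cyclotomic polynomial, exploiting the observation that setting $x=0$ in $\omega=\frac{x+\sqrt{x^2+4}}{2}$ gives $\omega=1$. The case $n=2$ is immediate: since $\Psi_2(x)=x$, we have $\Psi_2(0)=0$, matching the first branch.

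For $n\geq3$ I would invoke Theorem~\ref{thm:Psi=Phi}, which provides the identity $\Psi_n(x)=\Phi_n(-\omega^2)/\omega^{\varphi(n)}$ valid for all real $x$ (its proof establishes the equality for $x\neq0$ and extends by continuity, so $x=0$ is admissible). Evaluating at $x=0$, where $\omega=1$ and hence $\omega^2=1$ and $\omega^{\varphi(n)}=1$, collapses the formula to
\[
\Psi_n(0)=\frac{\Phi_n(-1)}{1}=\Phi_n(-1).
\]
Thus the theorem reduces entirely to computing $\Phi_n(-1)$ for $n\geq3$.

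To evaluate $\Phi_n(-1)$ I would split into three cases according to the power of $2$ dividing $n$, reducing each to an evaluation at $1$ where Theorem~\ref{thm:cyclotomicconst} applies. If $n$ is odd, then Theorem~\ref{thm:cyclotomicidentityp=2} with $m=n$ gives $\Phi_n(-1)=\Phi_{2n}(1)$; since $2n$ has two distinct prime factors it is not a prime power, so this equals $1$. If $n=2m$ with $m$ odd and $m\geq3$, then Theorem~\ref{thm:cyclotomicidentityp=2} again yields $\Phi_n(-1)=\Phi_m(1)$, which is $p$ exactly when $m=p^\alpha$ (equivalently $n=2p^\alpha$ with $p$ odd) and $1$ otherwise. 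Finally, if $4\mid n$, then Theorem~\ref{thm:cyclotomicidentity}, equation~\eqref{eqn:Phi_pm,p|m}, with $p=2$ and $m=n/2$ (legitimate since $2\mid n/2$) gives $\Phi_n(-1)=\Phi_{n/2}(1)$; here $n/2$ is even, so it is a prime power only when $n/2=2^\beta$, i.e.\ $n=2^{\beta+1}=2\cdot2^\beta$, producing $\Phi_n(-1)=2$, and otherwise $\Phi_n(-1)=1$. Assembling the three cases shows $\Phi_n(-1)=p$ precisely when $n=2p^\alpha$ for some prime $p$ and positive integer $\alpha$, and $\Phi_n(-1)=1$ in all remaining cases, which is the asserted formula.

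The substitution $x=0\mapsto\omega=1$ does essentially all the work, so no genuine difficulty arises in the analysis; the only point requiring care is the bookkeeping that confirms the ``prime power'' condition produced after each reduction corresponds exactly to $n$ having the form $2p^\alpha$. In particular, one must verify that the prime $p=2$ enters only through the $4\mid n$ branch (where $n$ is forced to be a power of $2$), while odd primes $p$ enter only through the $n=2m$ branch, so that the two branches together account for every value of $n$ of the form $2p^\alpha$ without overlap or omission.
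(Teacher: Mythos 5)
Your proposal is correct and follows essentially the same route as the paper: both reduce $\Psi_n(0)$ to $\Phi_n(-1)$ via Theorem~\ref{thm:Psi=Phi} with $\omega=1$ at $x=0$, then evaluate $\Phi_n(-1)$ using Theorems~\ref{thm:cyclotomicconst}, \ref{thm:cyclotomicidentityp=2}, and \ref{thm:cyclotomicidentity}. The only divergence is the odd-$n$ case, where the paper telescopes $\Phi_n(-1)$ through the prime factorization of $n$, while you read $\Phi_{2m}(x)=\Phi_m(-x)$ backwards to obtain $\Phi_n(-1)=\Phi_{2n}(1)=1$ directly; this is a slightly shorter but equivalent piece of bookkeeping.
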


\begin{proof}
Clearly, $\Psi_1(0)=1$ and $\Psi_2(0)=0$, so we now focus on $n\geq 3$.  If $x=0$, then $\omega=1$.  By Theorem~\ref{thm:Psi=Phi},
$$\Psi_n(0)=\frac{\Phi_n(-1)}{1^{\varphi(n)}}=\Phi_{n}(-1).$$
Suppose that $n=2m$ is even with $m\geq 2$.  If $m$ is odd, then $\Phi_{2m}(-1)=\Phi_m(1)$ by Theorem~\ref{thm:cyclotomicidentityp=2}, and if $m$ is even, then $\Phi_{2m}(-1)=\Phi_m((-1)^2)=\Phi_m(1)$ by Theorem~\ref{thm:cyclotomicidentity}.  In both cases, the result follows from Theorem~\ref{thm:cyclotomicconst}.

Now, suppose that $n$ is odd and let $n=p_1^{\alpha_1}p_2^{\alpha_2}\cdots p_r^{\alpha_r}$ be the prime factorization of $n$.  By repeated use of Theorem~\ref{thm:cyclotomicidentity},
$$\Phi_n(-1)=\Phi_{p_1p_2\cdots p_r}\left((-1)^{p_1^{\alpha_1-1}p_2^{\alpha_2-1}\cdots p_r^{\alpha_r-1}}\right)=\Phi_{p_1p_2\cdots p_r}(-1)=\frac{\Phi_{p_1p_2\cdots p_{r-1}}\left((-1)^{p_r}\right)}{\Phi_{p_1p_2\cdots p_{r-1}}(-1)}=1.$$
\end{proof}

The next theorem provides a number of identities that are parallel to Theorems~\ref{thm:cyclotomicidentityp=2} and \ref{thm:cyclotomicidentity}.

\begin{theorem}\label{thm:Psi_pm}
Let $p$ be a prime and $m\geq2$ be an integer.
\begin{enumerate}[\indent$(a)$]
\item\label{thm:Psi_2m,2|m} If $p=2$ and $p\mid m$, then 
$$\Psi_{2m}(x)=\begin{cases}
-i^{\varphi(m)}\Psi_m\left(i\omega^2-(i\omega^2)^{-1}\right)=-i^{\varphi(m)}\Psi_m(i(x^2+2))&\text{if }m=2;\\
i^{\varphi(m)}\Psi_m\left(i\omega^2-(i\omega^2)^{-1}\right)=i^{\varphi(m)}\Psi_m(i(x^2+2))&\text{if }m\geq3.
\end{cases}$$
\item\label{thm:Psi_2m,2not|m} If $p=2$ and $p\nmid m$, then
$$\Psi_{2m}(x)=i^{\varphi(m)}\Psi_m\left(i\omega-(i\omega)^{-1}\right)=i^{\varphi(m)}\Psi_m\left(i\sqrt{x^2+4}\right).$$
\item\label{thm:Psi_pm,p|m} If $p>2$ and $p\mid m$, then
$$\Psi_{pm}(x)=\Psi_m\left(\omega^p-\omega^{-p}\right)=\Psi_m(x\Psi_{2p}(x)).$$
\item\label{thm:Psi_pm,pnot|m} If $p>2$ and $p\nmid m$, then
$$\Psi_{pm}(x)=\frac{\Psi_m\left(\omega^p-\omega^{-p}\right)}{\Psi_m\left(\omega-\omega^{-1}\right)}=\frac{\Psi_m(x\Psi_{2p}(x))}{\Psi_m(x)}.$$
\end{enumerate}
\end{theorem}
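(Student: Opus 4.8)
The plan is to prove all four parts by the same three-step maneuver: use Theorem~\ref{thm:Psi=Phi} to rewrite $\Psi_{pm}(x)$ as a cyclotomic expression in $\omega$, apply the appropriate cyclotomic identity, and then use Theorem~\ref{thm:Psi=Phi} a second time in reverse to recognize the result as $\Psi_m$ evaluated at a new point. Since $pm\geq3$ in every case, Remark~\ref{rmk:Psi=Phi} gives $\Psi_{pm}(x)=\Phi_{pm}(-\omega^2)/\omega^{\varphi(pm)}$ with no sign ambiguity. For parts~(\ref{thm:Psi_pm,p|m}) and (\ref{thm:Psi_pm,pnot|m}), where $p$ is odd, I would apply Theorem~\ref{thm:cyclotomicidentity} and use $(-\omega^2)^p=-\omega^{2p}=-(\omega^p)^2$, so the argument of $\Phi_m$ is already in the shape $-\alpha^2$ with the real quantity $\alpha=\omega^p$. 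For parts~(\ref{thm:Psi_2m,2|m}) and (\ref{thm:Psi_2m,2not|m}), where $p=2$, the relevant identities (Theorem~\ref{thm:cyclotomicidentity} with $2\mid m$, and Theorem~\ref{thm:cyclotomicidentityp=2} with $2\nmid m$) produce $\Phi_m(\omega^4)$ and $\Phi_m(\omega^2)$ respectively, which do not have the sign of $-\alpha^2$; to force them into that shape I would set $\alpha=i\omega^2$ and $\alpha=i\omega$. This introduction of the imaginary unit is exactly what produces the factors $i^{\varphi(m)}$ in the statement.

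Once the argument is written as $-\alpha^2$, the remaining work is bookkeeping of two independent features. First, reconciling the denominator power $\omega^{\varphi(pm)}$ with the power $\alpha^{\varphi(m)}$ demanded by Theorem~\ref{thm:Psi=Phi} uses the multiplicativity of Euler's totient: $\varphi(pm)=p\,\varphi(m)$ when $p\mid m$ and $\varphi(pm)=(p-1)\varphi(m)$ when $p\nmid m$. In the cases $p=2$ the substitution $\alpha=i\omega^2$ or $\alpha=i\omega$ pulls a factor $i^{\varphi(m)}$ out of the denominator, while for odd $p$ the real substitution $\alpha=\omega^p$ contributes no such factor. Second, the output is a single $\Psi_m$ term in parts~(\ref{thm:Psi_2m,2|m}), (\ref{thm:Psi_2m,2not|m}), and (\ref{thm:Psi_pm,p|m}), where a single-term cyclotomic identity is applied, whereas in part~(\ref{thm:Psi_pm,pnot|m}) the quotient identity of Theorem~\ref{thm:cyclotomicidentity} leaves an extra denominator $\Phi_m(-\omega^2)$, which I would recognize as $\omega^{\varphi(m)}\Psi_m(\omega-\omega^{-1})$ and which yields the quotient form. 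Finally, the $n=2$ branch of Theorem~\ref{thm:Psi=Phi} governs the sign flips when $m=2$; this occurs in parts~(\ref{thm:Psi_2m,2|m}) and (\ref{thm:Psi_pm,pnot|m}), but in the latter the sign cancels between numerator and denominator, so only part~(\ref{thm:Psi_2m,2|m}) displays an explicit sign change.

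The last step is to simplify $\alpha-\alpha^{-1}$ into the closed forms stated. Using \eqref{eqn:omega-omegainv} together with the identities $\omega+\omega^{-1}=\sqrt{x^2+4}$ and $\omega^2+\omega^{-2}=x^2+2$, the substitutions $\alpha=i\omega$ and $\alpha=i\omega^2$ give $i\sqrt{x^2+4}$ and $i(x^2+2)$ immediately. The one genuine computation is the identity $\omega^p-\omega^{-p}=x\Psi_{2p}(x)$ needed in parts~(\ref{thm:Psi_pm,p|m}) and (\ref{thm:Psi_pm,pnot|m}); I would obtain it by evaluating $\Psi_{2p}(x)=\Phi_{2p}(-\omega^2)/\omega^{\varphi(2p)}$ via Theorem~\ref{thm:cyclotomicidentityp=2} and $\Phi_p(y)=(y^p-1)/(y-1)$ to get $\Psi_{2p}(x)=\Phi_p(\omega^2)/\omega^{p-1}=(\omega^{2p}-1)/\bigl(\omega^{p-1}(\omega^2-1)\bigr)$, and then multiplying by $x=(\omega^2-1)/\omega$. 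I expect the main obstacle to be not any one deep idea but the disciplined tracking of signs, powers of $i$, and totient values across the four cases; the structural source of the asymmetry between the odd-$p$ and even-$p$ statements is precisely that an odd $p$ preserves the sign of $(-\omega^2)^p$ while $p=2$ does not, forcing the square-root substitution that injects the factors $i^{\varphi(m)}$.
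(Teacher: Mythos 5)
Your proposal is correct and matches the paper's proof essentially step for step: both arguments apply Theorem~\ref{thm:Psi=Phi} via Remark~\ref{rmk:Psi=Phi}, then Theorem~\ref{thm:cyclotomicidentityp=2} or Theorem~\ref{thm:cyclotomicidentity}, then read the result back through the substitutions $\alpha=i\omega^2$, $i\omega$, $\omega^p$ with the same totient bookkeeping and the same $m=2$ sign analysis in parts~(\ref{thm:Psi_2m,2|m}) and (\ref{thm:Psi_pm,pnot|m}). The only deviation is your derivation of $\omega^p-\omega^{-p}=x\Psi_{2p}(x)$, which you obtain correctly from $\Psi_{2p}(x)=\Phi_p(\omega^2)/\omega^{p-1}$ and $\Phi_p(y)=(y^p-1)/(y-1)$, whereas the paper instead invokes Webb and Parberry's formula $F_n(x)=\bigl(\omega^n-(-\omega)^{-n}\bigr)/(\omega+\omega^{-1})$ and computes $F_{2p}(x)/F_p(x)$ --- an equally short and equivalent calculation.
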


\begin{proof}
$(\ref{thm:Psi_2m,2|m})$ If $2\mid m$, then
$$\Psi_{2m}(x)=\frac{\Phi_{2m}(-\omega^2)}{\omega^{\varphi(2m)}}=\frac{\Phi_m(\omega^4)}{\omega^{2\varphi(m)}}=i^{\varphi(m)}\frac{\Phi_m(-(i\omega^2)^2)}{(i\omega^2)^{\varphi(m)}},$$
where the first equality is due to Theorem~\ref{thm:Psi=Phi}, and the second equality is due to equation~\eqref{eqn:Phi_pm,p|m}. By Remark~\ref{rmk:Psi=Phi}, if $m=2$, then
$$\Psi_{2m}(x)=-i^{\varphi(m)}\Psi_m\left(i\omega^2-(i\omega^2)^{-1}\right)=-i^{\varphi(m)}\Psi_m\left(i\left(\omega^2+\omega^{-2}\right)\right)=-i^{\varphi(m)}\Psi_m\left(i(x^2+2)\right),$$
and if $m\geq3$, then
$$\Psi_{2m}(x)=i^{\varphi(m)}\Psi_m\left(i\omega^2-(i\omega^2)^{-1}\right)=i^{\varphi(m)}\Psi_m\left(i\left(\omega^2+\omega^{-2}\right)\right)=i^{\varphi(m)}\Psi_m\left(i(x^2+2)\right).$$

$(\ref{thm:Psi_2m,2not|m})$ If $2\nmid m$, then $m\geq3$, and
$$\Psi_{2m}(x)=\frac{\Phi_{2m}(-\omega^2)}{\omega^{\varphi(2m)}}=\frac{\Phi_m(\omega^2)}{\omega^{\varphi(m)}}=i^{\varphi(m)}\frac{\Phi_m(-(i\omega)^2)}{(i\omega)^{\varphi(m)}},$$
where the second equality is due to Theorem~\ref{thm:cyclotomicidentityp=2}. Again by Remark~\ref{rmk:Psi=Phi}, we have
$$\Psi_{2m}(x)=i^{\varphi(m)}\Psi_m\left(i\omega-(i\omega)^{-1}\right)=i^{\varphi(m)}\Psi_m\left(i\left(\omega+\omega^{-1}\right)\right)=i^{\varphi(m)}\Psi_m\left(i\sqrt{x^2+4}\right),$$

$(\ref{thm:Psi_pm,p|m})$ If $p>2$ and $p\mid m$, then $m\geq3$, and
\begin{equation}\label{eqn:Psi_pmhalffinish}
\Psi_{pm}(x)=\frac{\Phi_{pm}(-\omega^2)}{\omega^{\varphi(pm)}}=\frac{\Phi_m((-\omega^2)^p)}{\omega^{p\varphi(m)}}=\frac{\Phi_m(-(\omega^p)^2)}{(\omega^p)^{\varphi(m)}}=\Psi_m\left(\omega^p-\omega^{-p}\right).
\end{equation}
From Webb and Parberry \cite{wp},
\begin{equation*}\label{eqn:F_n omega}
F_n(x)=\frac{\omega^n-(-\omega)^{-n}}{\omega+\omega^{-1}}
\end{equation*}
for all positive integers $n$, so
$$\omega^p-\omega^{-p}=\frac{F_{2p}(x)}{F_p(x)}=\frac{\Psi_2(x)\Psi_p(x)\Psi_{2p}(x)}{\Psi_p(x)}=x\Psi_{2p}(x).$$
Substituting this result into equation~\eqref{eqn:Psi_pmhalffinish} yields
$$\Psi_{pm}(x)=\Psi_m(x\Psi_{2p}(x)).$$

$(\ref{thm:Psi_pm,pnot|m})$ If $p>2$ and $p\nmid m$, then
$$\Psi_{pm}(x)=\frac{\Phi_{pm}(-\omega^2)}{\omega^{\varphi(pm)}}=\frac{\Phi_m((-\omega^2)^p)}{\Phi_m(-\omega^2)\omega^{(p-1)\varphi(m)}}=\left.\frac{\Phi_m(-(\omega^p)^2)}{(\omega^p)^{\varphi(m)}}\right/\frac{\Phi_m(-\omega^2)}{\omega^{\varphi(m)}},$$
which is equal to 
$$\frac{\Psi_m(\omega^p-\omega^{-p})}{\Psi_m(\omega-\omega^{-1})}=\frac{\Psi_m(x\Psi_{2p}(x))}{\Psi_m(x)}$$
for both $m=2$ and $m\geq3$.
\end{proof}

\section{Discriminant and resultant formulas}\label{sec:discriminant}

Our goal in this section is to provide the formulas of the discriminant $\Delta(\Psi_n(x))$ for all integers $n\geq2$ and the resultant $\res(\Psi_m(x),\Psi_n(x))$ for all integers $2\leq m<n$. To achieve this, we compare $\Delta(\Psi_n(x))$ and $\res(\Psi_m(x),\Psi_n(x))$ with their cyclotomic counterparts. We begin with studying the discriminants.
  
\begin{theorem}\label{thm:Psi/Phidiscrim}
Let $n\geq2$ be an integer. Then
$$\frac{\Delta(\Psi_n(x))}{\Delta(\Phi_n(x))}=\begin{cases}
\frac{2^{\varphi(n)}}{p}&\text{if $n=p^\alpha$ for some prime $p$ and some positive integer $\alpha$};\\
2^{\varphi(n)}&\text{otherwise}.
\end{cases}$$
\end{theorem}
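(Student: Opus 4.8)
The plan is to exploit the substitution $x=\omega-\omega^{-1}$ together with Theorem~\ref{thm:Psi=Phi} to compare $\Psi_n$ and $\Phi_n$ root-by-root, and then pass to discriminants. Since $\Delta(f)=(-1)^{d(d-1)/2}\prod_i f'(\alpha_i)$ for a monic polynomial $f$ of degree $d$ with roots $\alpha_1,\dots,\alpha_d$, and since both $\Psi_n$ and $\Phi_n$ are monic of degree $\varphi(n)$, the sign $(-1)^{\varphi(n)(\varphi(n)-1)/2}$ cancels in the ratio, leaving
$$\frac{\Delta(\Psi_n(x))}{\Delta(\Phi_n(x))}=\frac{\prod_s\Psi_n'(\beta_s)}{\prod_s\Phi_n'(\zeta_n^s)},$$
where $\beta_s=2i\cos\frac{\pi s}{n}$ ranges over the roots of $\Psi_n$ and $\zeta_n^s$ over the roots of $\Phi_n$, both indexed by $\{s:1\le s\le n,\ \gcd(s,n)=1\}$. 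The degenerate case $n=2$, where both polynomials are linear, I would dispatch directly: $\Delta(\Psi_2)=\Delta(\Phi_2)=1$, matching $2^{\varphi(2)}/2=1$.

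Next I would pin down the precise root correspondence. Writing $\omega_s=\zeta_4\zeta_{2n}^s=ie^{i\pi s/n}$, a short computation gives $\omega_s-\omega_s^{-1}=2i\cos\frac{\pi s}{n}=\beta_s$ and $-\omega_s^2=\zeta_n^s$, so the single value $\omega_s$ realizes the substitution for the $s$-th root. Differentiating the identity $\omega^{\varphi(n)}\Psi_n(\omega-\omega^{-1})=\Phi_n(-\omega^2)$ (valid for $n\ge3$, from Remark~\ref{rmk:Psi=Phi}) with respect to $\omega$, evaluating at $\omega_s$ where $\Psi_n(\beta_s)=\Phi_n(-\omega_s^2)=0$, and using $\frac{d}{d\omega}(\omega-\omega^{-1})=1+\omega^{-2}$, the chain rule yields
$$\Psi_n'(\beta_s)=\frac{-2\,\omega_s^{3-\varphi(n)}\,\Phi_n'(\zeta_n^s)}{\omega_s^2+1},$$
the denominator being nonzero because $\omega_s^2+1=1-\zeta_n^s\neq0$ for a primitive $n$-th root of unity, which also guarantees $\frac{d}{d\omega}(\omega-\omega^{-1})\neq0$ at $\omega_s$.

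Taking the product over all $s$ and cancelling the $\Phi_n'(\zeta_n^s)$ factors then gives
$$\frac{\Delta(\Psi_n(x))}{\Delta(\Phi_n(x))}=(-2)^{\varphi(n)}\,\frac{\big(\prod_s\omega_s\big)^{3-\varphi(n)}}{\prod_s(\omega_s^2+1)}.$$
The two remaining products are elementary. Using $\sum_{\gcd(s,n)=1}s=\tfrac12 n\varphi(n)$ one finds $\prod_s\omega_s=i^{\varphi(n)}e^{i\pi\varphi(n)/2}=(-1)^{\varphi(n)}$, which equals $1$ for $n\ge3$ since $\varphi(n)$ is then even, so the numerator power collapses and $(-2)^{\varphi(n)}=2^{\varphi(n)}$. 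For the denominator, $\omega_s^2+1=1-\zeta_n^s$ gives $\prod_s(\omega_s^2+1)=\prod_{\gcd(s,n)=1}(1-\zeta_n^s)=\Phi_n(1)$. Substituting the value of $\Phi_n(1)$ from Theorem~\ref{thm:cyclotomicconst} produces exactly the two claimed cases.

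The computation is otherwise routine; the points that must be handled carefully are the bookkeeping of the correspondence $s\leftrightarrow\omega_s$, so that a single index set governs both products with no spurious roots or multiplicities, and the observation that all of the $n$-dependence of the denominator funnels into $\Phi_n(1)$, which is what invokes Theorem~\ref{thm:cyclotomicconst}. I expect the only genuinely delicate step to be confirming $\prod_s\omega_s=(-1)^{\varphi(n)}$ and that the factor $\big(\prod_s\omega_s\big)^{3-\varphi(n)}$ disappears, since a sign slip there would corrupt the leading $2^{\varphi(n)}$.
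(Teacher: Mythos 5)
Your proof is correct, but it takes a genuinely different route from the paper's. The paper works directly with the root-difference form of the discriminant: it expands both $\Delta(\Psi_n(x))$ and $\Delta(\Phi_n(x))$ as double products over pairs of roots, converts them using $\cos x-\cos y=-2\sin\frac{x+y}{2}\sin\frac{x-y}{2}$, $\sin 2x=2\sin x\cos x$, and $\cos x=\sin(\frac{\pi}{2}-x)$, cancels everything down to $\Delta(\Psi_n(x))/\Delta(\Phi_n(x))=\bigl(\prod_{\gcd(s,n)=1}\sin\frac{s\pi}{n}\bigr)^{-1}$, and then evaluates that coprime sine product from scratch by inclusion--exclusion on $\prod_{s=1}^{n-1}\sin\frac{s\pi}{n}=n/2^{n-1}$, which is where the prime-power case split emerges. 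You instead use the $\Delta(f)=\pm\prod_i f'(\alpha_i)$ formulation and differentiate the functional identity $\omega^{\varphi(n)}\Psi_n(\omega-\omega^{-1})=\Phi_n(-\omega^2)$ at $\omega_s=ie^{i\pi s/n}$; your per-root relation and the two auxiliary products all check out (for instance, at $n=3$, $s=1$ your formula returns $\Psi_3'(i)=2i$ and the global ratio $4/3=2^{\varphi(3)}/3$, as it should), and the case split is delegated entirely to $\Phi_n(1)$ via Theorem~\ref{thm:cyclotomicconst}. In effect the two arguments compute the same quantity, since $\prod_{\gcd(s,n)=1}\sin\frac{s\pi}{n}=\Phi_n(1)/2^{\varphi(n)}$: the paper re-derives this sine product by hand with inclusion--exclusion, while you short-circuit it through $\prod_s(1-\zeta_n^s)=\Phi_n(1)$, which gives a shorter argument and makes transparent why the answer dichotomizes at prime powers. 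What the paper's approach buys in exchange is machinery it reuses verbatim for the resultant (Theorem~\ref{thm:Psi/Phires}), whereas your derivative trick does not transfer to resultants of two distinct polynomials.

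One point you should make explicit rather than tacit: Theorem~\ref{thm:Psi=Phi} and Remark~\ref{rmk:Psi=Phi} are stated with $\omega=\frac{x+\sqrt{x^2+4}}{2}$ for $x$ real, so $\omega$ ranges over positive reals, while you differentiate the identity in $\omega$ and evaluate at the complex points $\omega_s$. This is legitimate because both sides are Laurent polynomials in $\omega$ that agree on the infinite set $\omega\in(0,\infty)$, hence agree identically on $\C\setminus\{0\}$ and may be differentiated there; but as written your proposal leans on this continuation without stating it, and it is the one step where the argument genuinely exceeds what the cited remark literally asserts.
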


\begin{proof}
Here are three elementary trigonometric identities that we will use in this proof:
\begin{equation}\label{eqn:trigcos-cos}
\cos x-\cos y=-2\sin\frac{x+y}{2}\sin\frac{x-y}{2},
\end{equation}
\begin{equation}\label{eqn:trigdouble}
\sin2x=2\sin x\cos x,
\end{equation}
and
\begin{equation}\label{eqn:trigcomplementary}
\cos x=\sin\left(\frac{\pi}{2}-x\right).
\end{equation}
Another useful identity is
\begin{equation}\label{eqn:sums}
\sum_{\substack{1\leq s\leq n-1\\\gcd(s,n)=1}}s=\frac{n\varphi(n)}{2}
\end{equation}
for all integers $n\geq2$. Using this identity, we find that for all integers $n\geq2$,
\begin{equation}\label{eqn:s+t}
\begin{split}
\sum_{\substack{1\leq s\neq t\leq n-1\\\gcd(s,n)=\gcd(t,n)=1}}\hspace{-15pt}(s+t)&=\sum_{\substack{1\leq s,t\leq n-1\\\gcd(s,n)=\gcd(t,n)=1}}\hspace{-15pt}(s+t)-\sum_{\substack{1\leq s=t\leq n-1\\\gcd(s,n)=\gcd(t,n)=1}}\hspace{-15pt}(s+t)\\
&=\sum_{\substack{1\leq s\leq n-1\\\gcd(s,n)=1}}\Bigg(\sum_{\substack{1\leq t\leq n-1\\\gcd(t,n)=1}}s+\sum_{\substack{1\leq t\leq n-1\\\gcd(t,n)=1}}t\Bigg)-\sum_{\substack{1\leq s\leq n-1\\\gcd(s,n)=1}}2s\\
&=\sum_{\substack{1\leq s\leq n-1\\\gcd(s,n)=1}}\left(s\varphi(n)+\frac{n\varphi(n)}{2}\right)-2\cdot\frac{n\varphi(n)}{2}\\
&=\frac{n\varphi(n)}{2}\cdot\varphi(n)+\frac{n\varphi(n)}{2}\cdot\varphi(n)-n\varphi(n)\\
&=n\varphi(n)(\varphi(n)-1).
\end{split}
\end{equation}

With these identities established, we now begin the proof of this theorem. By the definition of the discriminant of a polynomial, we obtain
\begin{align*}
\Delta(\Psi_n(x))&=\prod_{\substack{1\leq s\neq t\leq n-1\\\gcd(s,n)=\gcd(t,n)=1}}\hspace{-15pt}\left(2i\cos\frac{s\pi}{n}-2i\cos\frac{t\pi}{n}\right)\\
&=\prod_{\substack{1\leq s\neq t\leq n-1\\\gcd(s,n)=\gcd(t,n)=1}}\hspace{-15pt}\left(2i\left(-2\sin\frac{(s+t)\pi}{2n}\sin\frac{(s-t)\pi}{2n}\right)\right)\qquad\text{(by \eqref{eqn:trigcos-cos})}\\
&=(-1)^{\varphi(n)^2-\varphi(n)}\cdot\prod_{\substack{1\leq s\neq t\leq n-1\\\gcd(s,n)=\gcd(t,n)=1}}\hspace{-15pt}\left(2i\cdot2\sin\frac{(s+t)\pi}{2n}\sin\frac{(s-t)\pi}{2n}\right)\\
&=\prod_{\substack{1\leq s\neq t\leq n-1\\\gcd(s,n)=\gcd(t,n)=1}}\hspace{-15pt}\left(2i\cdot2\sin\frac{(s+t)\pi}{2n}\sin\frac{(s-t)\pi}{2n}\right)
\end{align*}
and
\begin{align*}
\Delta(\Phi_n(x))&=\prod_{\substack{1\leq s\neq t\leq n-1\\\gcd(s,n)=\gcd(t,n)=1}}\hspace{-15pt}\left(e^{\frac{2is\pi}{n}}-e^{\frac{2it\pi}{n}}\right)\\
&=\prod_{\substack{1\leq s\neq t\leq n-1\\\gcd(s,n)=\gcd(t,n)=1}}\hspace{-15pt}e^{\frac{2i(s+t)\pi}{2n}}\left(e^{\frac{2i(s-t)\pi}{2n}}-e^{-\frac{2i(s-t)\pi}{2n}}\right)\\
&=\prod_{\substack{1\leq s\neq t\leq n-1\\\gcd(s,n)=\gcd(t,n)=1}}\hspace{-15pt}e^{\frac{2i(s+t)\pi}{2n}}\cdot\prod_{\substack{1\leq s\neq t\leq n-1\\\gcd(s,n)=\gcd(t,n)=1}}\hspace{-15pt}\left(e^{\frac{2i(s-t)\pi}{2n}}-e^{-\frac{2i(s-t)\pi}{2n}}\right)\\
&=\exp\Bigg(\frac{i\pi}{n}\sum_{\substack{1\leq s\neq t\leq n-1\\\gcd(s,n)=\gcd(t,n)=1}}\hspace{-15pt}(s+t)\Bigg)\cdot\prod_{\substack{1\leq s\neq t\leq n-1\\\gcd(s,n)=\gcd(t,n)=1}}\hspace{-15pt}\left(2i\sin\frac{2(s-t)\pi}{2n}\right)\\
&=\exp\left(\frac{i\pi}{n}\cdot n\varphi(n)(\varphi(n)-1)\right)\cdot\prod_{\substack{1\leq s\neq t\leq n-1\\\gcd(s,n)=\gcd(t,n)=1}}\hspace{-15pt}\left(2i\sin\frac{2(s-t)\pi}{2n}\right)\qquad\text{(by \eqref{eqn:s+t})}\\
&=\prod_{\substack{1\leq s\neq t\leq n-1\\\gcd(s,n)=\gcd(t,n)=1}}\hspace{-15pt}\left(2i\cdot2\sin\frac{(s-t)\pi}{2n}\cos\frac{(s-t)\pi}{2n}\right).\qquad\text{(by \eqref{eqn:trigdouble})}
\end{align*}

Next, we compute the ratio of the discriminants to obtain
\begin{align*}
\frac{\Delta(\Psi_n(x))}{\Delta(\Phi_n(x))}&=\frac{\displaystyle\prod_{\substack{1\leq s\neq t\leq n-1\\\gcd(s,n)=\gcd(t,n)=1}}\hspace{-15pt}\sin\frac{(s+t)\pi}{2n}}{\displaystyle\prod_{\substack{1\leq s\neq t\leq n-1\\\gcd(s,n)=\gcd(t,n)=1}}\hspace{-15pt}\cos\frac{(s-t)\pi}{2n}}\\
&=\frac{\displaystyle\prod_{\substack{1\leq s\neq t\leq n-1\\\gcd(s,n)=\gcd(t,n)=1}}\hspace{-15pt}\sin\frac{(s+t)\pi}{2n}}{\displaystyle\prod_{\substack{1\leq s\neq t\leq n-1\\\gcd(s,n)=\gcd(t,n)=1}}\hspace{-15pt}\sin\frac{((n-s)+t)\pi}{2n}}\qquad\text{(by \eqref{eqn:trigcomplementary})}\\
&=\frac{\displaystyle\left.\prod_{\substack{1\leq s,t\leq n-1\\\gcd(s,n)=\gcd(t,n)=1}}\hspace{-15pt}\sin\frac{(s+t)\pi}{2n}\right/\prod_{\substack{1\leq s=t\leq n-1\\\gcd(s,n)=\gcd(t,n)=1}}\hspace{-15pt}\sin\frac{(s+t)\pi}{2n}}{\displaystyle\left.\prod_{\substack{1\leq s,t\leq n-1\\\gcd(s,n)=\gcd(t,n)=1}}\hspace{-15pt}\sin\frac{((n-s)+t)\pi}{2n}\right/\prod_{\substack{1\leq s=t\leq n-1\\\gcd(s,n)=\gcd(t,n)=1}}\hspace{-15pt}\sin\frac{((n-s)+t)\pi}{2n}}.
\end{align*}
Note that the numerator of the numerator and the numerator of the denominator are the same product, and the denominator of the denominator evaluates to $1$, so we have
\begin{equation}\label{eqn:prodsin}
\frac{\Delta(\Psi_n(x))}{\Delta(\Phi_n(x))}=\frac{1}{\displaystyle\prod_{\substack{1\leq s=t\leq n-1\\\gcd(s,n)=\gcd(t,n)=1}}\hspace{-15pt}\sin\frac{(s+t)\pi}{2n}}=\frac{1}{\displaystyle\prod_{\substack{1\leq s\leq n-1\\\gcd(s,n)=1}}\sin\frac{s\pi}{n}}.
\end{equation}

Let $n=p_1^{\alpha_1}p_2^{\alpha_2}\dotsb p_r^{\alpha_r}$ be the unique prime factorization of $n$. By using the trigonometric identity 
$$\prod_{1\leq s\leq n-1}\sin\frac{s\pi}{n}=\frac{n}{2^{n-1}}=\frac{2n}{2^n}$$
offered in MathWorld \cite{sine}, we see for each $1\leq\ell\leq r$ that
$$\prod_{\substack{1\leq s\leq n-1\\p_{j_1}p_{j_2}\dotsb p_{j_\ell}\mid s}}\sin\frac{s\pi}{n}=\prod_{1\leq s\leq \frac{n}{p_{j_1}p_{j_2}\dotsb p_{j_\ell}}-1}\sin\frac{s\pi}{\;\frac{n}{p_{j_1}p_{j_2}\dotsb p_{j_\ell}}\;}=\frac{\frac{2n}{p_{j_1}p_{j_2}\dotsb p_{j_\ell}}}{\;2^{\frac{n}{p_{j_1}p_{j_2}\dotsb p_{j_\ell}}}\;}.$$
By the inclusion-exclusion principle, we have
$$\prod_{\substack{1\leq s\leq n-1\\\gcd(s,n)=1}}\sin\frac{s\pi}{n}=\frac{\displaystyle\frac{2n}{2^n}\cdot\prod_{1\leq j_1<j_2\leq r}\frac{\frac{2n}{p_{j_1}p_{j_2}}}{\;2^{\frac{n}{p_{j_1}p_{j_2}}}\;}\cdot\prod_{1\leq j_1<j_2<j_3<j_4\leq r}\frac{\frac{2n}{p_{j_1}p_{j_2}p_{j_3}p_{j_4}}}{\;2^{\frac{n}{p_{j_1}p_{j_2}p_{j_3}p_{j_4}}}\;}\dotsb}{\displaystyle\prod_{1\leq j_1\leq r}\frac{\frac{2n}{p_{j_1}}}{\;2^{\frac{n}{p_{j_1}}}\;}\cdot\prod_{1\leq j_1<j_2<j_3\leq r}\frac{\frac{2n}{p_{j_1}p_{j_2}p_{j_3}}}{\;2^{\frac{n}{p_{j_1}p_{j_2}p_{j_3}}}\;}\dotsb}.$$
The number of occurrences of the factor $2n$ in the numerator is equal to the sum of the positive coefficients in the binomial expansion of $(x-1)^r$, while the number of occurrences of the factor $2n$ in the denominator is equal to the sum of the negative coefficients in the binomial expansion of $(x-1)^r$. Hence, they cancel out each other completely. The number of occurrences of the factor $\frac{1}{p_j}$ in the numerator is equal to the sum of the negative coefficients in the binomial expansion of $(x-1)^{r-1}$, while the number of occurrences of the factor $\frac{1}{p_j}$ in the denominator is equal to the sum of the positive coefficients in the binomial expansion of $(x-1)^{r-1}$. Hence, they cancel out each other completely when $r\geq2$, but the factor $\frac{1}{p_1}$ remains in the denominator when $r=1$. Lastly, it is easy to see that the exponent of the factor $\frac{1}{2}$ is precisely $\varphi(n)$. Therefore,
$$\prod_{\substack{1\leq s\leq n-1\\\gcd(s,n)=1}}\sin\frac{s\pi}{n}=\begin{cases}
\frac{\;\frac{1}{2^{\varphi(n)}}\;}{\frac{1}{p_1}}&\text{if }r=1;\\
\frac{1}{2^{\varphi(n)}}&\text{otherwise},
\end{cases}$$
which completes our proof by substituting this last equation into \eqref{eqn:prodsin}.
\end{proof}

The following is a corollary of Theorems~\ref{thm:Phidiscrim} and \ref{thm:Psi/Phidiscrim}, which expresses $\Delta(\Psi_n(x))$ as a closed form in a similar manner to $\Delta(\Phi_n(x))$.

\begin{corollary}\label{cor:discriminant}
Let $n\geq2$ be an integer. Then
$$\Delta(\Psi_n(x))=\begin{cases}
\frac{(-1)^{\lfloor\varphi(n)/2\rfloor}(2n)^{\varphi(n)}}{p^{p^{\alpha-1}+1}}&\text{if $n=p^\alpha$ for some prime $p$ and some positive integer $\alpha$};\\
\frac{(-1)^{\varphi(n)/2}(2n)^{\varphi(n)}}{\underset{p\mid n}{\prod}p^{\varphi(n)/(p-1)}}&\text{otherwise}.
\end{cases}$$
\end{corollary}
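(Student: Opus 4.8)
The plan is to obtain the closed form for $\Delta(\Psi_n(x))$ simply by multiplying together the two results already established: the closed form for $\Delta(\Phi_n(x))$ in Theorem~\ref{thm:Phidiscrim} and the ratio $\Delta(\Psi_n(x))/\Delta(\Phi_n(x))$ in Theorem~\ref{thm:Psi/Phidiscrim}. Writing $\Delta(\Psi_n(x)) = \Delta(\Phi_n(x)) \cdot \bigl(\Delta(\Psi_n(x))/\Delta(\Phi_n(x))\bigr)$, the factor $2^{\varphi(n)}$ appearing in the ratio combines with the $n^{\varphi(n)}$ from Theorem~\ref{thm:Phidiscrim} to produce $(2n)^{\varphi(n)}$ in both cases, which accounts for the numerator of the claimed formula. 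The sign $(-1)^{\lfloor\varphi(n)/2\rfloor}$ is inherited directly from Theorem~\ref{thm:Phidiscrim}, since the ratio is positive.

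First I would handle the prime-power case $n=p^\alpha$. Here the product $\prod_{q\mid n} q^{\varphi(n)/(q-1)}$ in Theorem~\ref{thm:Phidiscrim} reduces to the single factor $p^{\varphi(n)/(p-1)}$, and the ratio contributes an extra factor $1/p$, so the total power of $p$ in the denominator is $\varphi(n)/(p-1)+1$. The key simplification is to observe that $\varphi(p^\alpha)=p^{\alpha-1}(p-1)$, whence $\varphi(n)/(p-1)=p^{\alpha-1}$ and the denominator exponent becomes $p^{\alpha-1}+1$, matching the first branch of the corollary.

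For the remaining case, in which $n$ is not a prime power, the ratio from Theorem~\ref{thm:Psi/Phidiscrim} contributes only the factor $2^{\varphi(n)}$, so the denominator $\prod_{p\mid n} p^{\varphi(n)/(p-1)}$ carries over unchanged from Theorem~\ref{thm:Phidiscrim}. The only point requiring comment is the sign: Theorem~\ref{thm:Phidiscrim} records it as $(-1)^{\lfloor\varphi(n)/2\rfloor}$, while the corollary states it as $(-1)^{\varphi(n)/2}$. Since a non-prime-power $n$ has at least two distinct prime divisors and hence satisfies $n\geq6$, the quantity $\varphi(n)$ is even, so $\lfloor\varphi(n)/2\rfloor=\varphi(n)/2$ and the two sign expressions agree.

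There is no genuine obstacle in this argument; essentially all of the content has been front-loaded into Theorems~\ref{thm:Phidiscrim} and \ref{thm:Psi/Phidiscrim}. The only items that require any care are the elementary exponent identity $\varphi(p^\alpha)/(p-1)=p^{\alpha-1}$ in the prime-power case and the parity observation that permits removing the floor in the general case.
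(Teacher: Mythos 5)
Your proposal is correct and matches the paper exactly: the paper states Corollary~\ref{cor:discriminant} as an immediate consequence of Theorems~\ref{thm:Phidiscrim} and \ref{thm:Psi/Phidiscrim}, obtained by multiplying the closed form for $\Delta(\Phi_n(x))$ by the ratio $\Delta(\Psi_n(x))/\Delta(\Phi_n(x))$, which is precisely your argument. Your two bookkeeping checks --- the exponent identity $\varphi(p^\alpha)/(p-1)=p^{\alpha-1}$ and the parity of $\varphi(n)$ that lets you replace $\lfloor\varphi(n)/2\rfloor$ by $\varphi(n)/2$ in the non-prime-power case --- are exactly the details the paper leaves implicit.
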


We finish this section by studying the resultants.

\begin{theorem}\label{thm:Psi/Phires}
Let $2\leq m<n$ be integers. Then
$$\frac{\res(\Psi_m(x),\Psi_n(x))}{\res(\Phi_m(x),\Phi_n(x))}=1.$$
\end{theorem}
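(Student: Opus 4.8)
The plan is to evaluate both resultants directly from their root forms and take the ratio, exactly paralleling the computation in the proof of Theorem~\ref{thm:Psi/Phidiscrim}. Since all four polynomials are monic, I would start from Levy's root form for $\Psi$, namely
$$\res(\Psi_m(x),\Psi_n(x))=\prod_{\substack{1\leq s\leq m-1\\\gcd(s,m)=1}}\ \prod_{\substack{1\leq t\leq n-1\\\gcd(t,n)=1}}\left(2i\cos\frac{s\pi}{m}-2i\cos\frac{t\pi}{n}\right),$$
together with the analogous double product for $\res(\Phi_m(x),\Phi_n(x))$, in which each factor equals $e^{2\pi is/m}-e^{2\pi it/n}$.

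For the numerator I would apply \eqref{eqn:trigcos-cos} to rewrite each factor as $-4i\sin\frac{(sn+tm)\pi}{2mn}\sin\frac{(sn-tm)\pi}{2mn}$. For the denominator I would factor $e^{\pi i(sn+tm)/(mn)}$ out of each term, leaving $2i\sin\frac{(sn-tm)\pi}{mn}$, and then split off a double-angle factor via \eqref{eqn:trigdouble} to obtain $4i\sin\frac{(sn-tm)\pi}{2mn}\cos\frac{(sn-tm)\pi}{2mn}$. This is the same bookkeeping as in Theorem~\ref{thm:Psi/Phidiscrim}, now carried out with two independent indices $s$ and $t$.

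The prefactors should then be shown to be trivial. The product of the exponentials contributes $\exp\!\left(\frac{\pi i}{mn}\sum_{s,t}(sn+tm)\right)$, and a short computation using \eqref{eqn:sums} gives $\sum_{s,t}(sn+tm)=mn\varphi(m)\varphi(n)$, so this factor is $(-1)^{\varphi(m)\varphi(n)}$; the sign $-1$ coming from the numerator factors is likewise raised to the power $\varphi(m)\varphi(n)$. Since $n\geq3$ forces $\varphi(n)$ to be even (which is precisely what handles the edge case $m=2$, where $\varphi(m)=1$), both contributions equal $1$. After cancelling the common factor $\prod_{s,t}4i\sin\frac{(sn-tm)\pi}{2mn}$, which is nonzero because the distinct cyclotomic polynomials $\Phi_m$ and $\Phi_n$ are coprime, the ratio collapses to
$$\frac{\res(\Psi_m(x),\Psi_n(x))}{\res(\Phi_m(x),\Phi_n(x))}=\frac{\displaystyle\prod_{s,t}\sin\frac{(sn+tm)\pi}{2mn}}{\displaystyle\prod_{s,t}\cos\frac{(sn-tm)\pi}{2mn}}.$$

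The final and most delicate step is to recognize this quotient as $1$. Using the complementary-angle identity \eqref{eqn:trigcomplementary}, each denominator factor becomes $\sin\frac{((m-s)n+tm)\pi}{2mn}$, and since $s\mapsto m-s$ is a bijection of $\{s:1\leq s\leq m-1,\ \gcd(s,m)=1\}$ onto itself, reindexing shows the denominator product is identical to the numerator product. I expect the main obstacle to be the sign and prefactor bookkeeping together with confirming that this symmetry applies verbatim when $m=2$; unlike the discriminant computation, where a residual single product $\prod_s\sin\frac{s\pi}{n}$ survived and had to be evaluated, here the two-index symmetry makes numerator and denominator literally equal, so no residual term remains and the ratio is exactly $1$.
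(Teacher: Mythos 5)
Your proposal is correct and takes essentially the same route as the paper's proof: the same factorizations via \eqref{eqn:trigcos-cos} and \eqref{eqn:trigdouble}, the same evaluation of the exponential prefactor using \eqref{eqn:sums} (your sum $\sum_{s,t}(sn+tm)=mn\varphi(m)\varphi(n)$ is the paper's identity \eqref{eqn:s/m+t/n} multiplied by $mn$), and the same final step of converting the cosines with \eqref{eqn:trigcomplementary} and reindexing via the bijection $s\mapsto m-s$ on the reduced residues modulo $m$. The only cosmetic difference is that you write the angles over the common denominator $2mn$ where the paper keeps them as $\frac{s\pi}{2m}\pm\frac{t\pi}{2n}$.
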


\begin{proof}
Using identity \eqref{eqn:sums}, we find that for all integers $m,n\geq2$,
\begin{equation}\label{eqn:s/m+t/n}
\begin{split}
\sum_{\substack{1\leq s\leq m-1\\1\leq t\leq n-1\\\gcd(s,m)=\gcd(t,n)=1}}\hspace{-15pt}\left(\frac{s}{m}+\frac{t}{n}\right)&=\sum_{\substack{1\leq s\leq m-1\\\gcd(s,m)=1}}\Bigg(\sum_{\substack{1\leq t\leq n-1\\\gcd(t,n)=1}}\frac{s}{m}+\sum_{\substack{1\leq t\leq n-1\\\gcd(t,n)=1}}\frac{t}{n}\Bigg)\\
&=\sum_{\substack{1\leq s\leq m-1\\\gcd(s,m)=1}}\left(\frac{s}{m}\cdot\varphi(n)+\frac{1}{n}\cdot\frac{n\varphi(n)}{2}\right)\\
&=\frac{1}{m}\cdot\frac{m\varphi(m)}{2}\cdot\varphi(n)+\varphi(m)\cdot\frac{\varphi(n)}{2}\\
&=\varphi(m)\varphi(n).
\end{split}
\end{equation}

With this identity established, we now begin the proof of this theorem. By the definition of the resultant of two polynomials, we obtain
\begin{align*}
&\hspace{18pt}\res(\Psi_m(x),\Psi_n(x))\\
&=\prod_{\substack{1\leq s\leq m-1\\1\leq t\leq n-1\\\gcd(s,m)=\gcd(t,n)=1}}\hspace{-15pt}\left(2i\cos\frac{s\pi}{m}-2i\cos\frac{t\pi}{n}\right)\\
&=\prod_{\substack{1\leq s\leq m-1\\1\leq t\leq n-1\\\gcd(s,m)=\gcd(t,n)=1}}\hspace{-15pt}\left(2i\left(-2\sin\left(\frac{s\pi}{2m}+\frac{t\pi}{2n}\right)\sin\left(\frac{s\pi}{2m}-\frac{t\pi}{2n}\right)\right)\right)\qquad\text{(by \eqref{eqn:trigcos-cos})}\\
&=(-1)^{\varphi(m)\varphi(n)}\cdot\prod_{\substack{1\leq s\leq m-1\\1\leq t\leq n-1\\\gcd(s,m)=\gcd(t,n)=1}}\hspace{-15pt}\left(2i\cdot2\sin\left(\frac{s\pi}{2m}+\frac{t\pi}{2n}\right)\sin\left(\frac{s\pi}{2m}-\frac{t\pi}{2n}\right)\right)\\
&=\prod_{\substack{1\leq s\leq m-1\\1\leq t\leq n-1\\\gcd(s,m)=\gcd(t,n)=1}}\hspace{-15pt}\left(2i\cdot2\sin\left(\frac{s\pi}{2m}+\frac{t\pi}{2n}\right)\sin\left(\frac{s\pi}{2m}-\frac{t\pi}{2n}\right)\right)
\end{align*}
and
\begin{align*}
&\hspace{18pt}\res(\Phi_m(x),\Phi_n(x))\\
&=\prod_{\substack{1\leq s\leq m-1\\1\leq t\leq n-1\\\gcd(s,m)=\gcd(t,n)=1}}\hspace{-15pt}\left(e^{\frac{2is\pi}{m}}-e^{\frac{2it\pi}{n}}\right)\\
&=\prod_{\substack{1\leq s\leq m-1\\1\leq t\leq n-1\\\gcd(s,m)=\gcd(t,n)=1}}\hspace{-15pt}e^{2i\left(\frac{s\pi}{2m}+\frac{t\pi}{2n}\right)}\left(e^{2i\left(\frac{s\pi}{2m}-\frac{t\pi}{2n}\right)}-e^{-2i\left(\frac{s\pi}{2m}-\frac{t\pi}{2n}\right)}\right)\\
&=\prod_{\substack{1\leq s\leq m-1\\1\leq t\leq n-1\\\gcd(s,m)=\gcd(t,n)=1}}\hspace{-15pt}e^{2i\left(\frac{s\pi}{2m}+\frac{t\pi}{2n}\right)}\cdot\prod_{\substack{1\leq s\leq m-1\\1\leq t\leq n-1\\\gcd(s,m)=\gcd(t,n)=1}}\hspace{-15pt}\left(e^{2i\left(\frac{s\pi}{2m}-\frac{t\pi}{2n}\right)}-e^{-2i\left(\frac{s\pi}{2m}-\frac{t\pi}{2n}\right)}\right)\\
&=\exp\Bigg(i\pi\cdot\sum_{\substack{1\leq s\leq m-1\\1\leq t\leq n-1\\\gcd(s,m)=\gcd(t,n)=1}}\hspace{-15pt}\left(\frac{s}{m}+\frac{t}{n}\right)\Bigg)\cdot\prod_{\substack{1\leq s\leq m-1\\1\leq t\leq n-1\\\gcd(s,m)=\gcd(t,n)=1}}\hspace{-15pt}\left(2i\sin\left(2\left(\frac{s\pi}{2m}-\frac{t\pi}{2n}\right)\right)\right)\\
&=\exp\left(i\pi\cdot\varphi(m)\varphi(n)\right)\cdot\prod_{\substack{1\leq s\leq m-1\\1\leq t\leq n-1\\\gcd(s,m)=\gcd(t,n)=1}}\hspace{-15pt}\left(2i\sin\left(2\left(\frac{s\pi}{2m}-\frac{t\pi}{2n}\right)\right)\right)\qquad\text{(by \eqref{eqn:s/m+t/n})}\\
&=\prod_{\substack{1\leq s\leq m-1\\1\leq t\leq n-1\\\gcd(s,m)=\gcd(t,n)=1}}\hspace{-15pt}\left(2i\cdot2\sin\left(\frac{s\pi}{2m}-\frac{t\pi}{2n}\right)\cos\left(\frac{s\pi}{2m}-\frac{t\pi}{2n}\right)\right).\qquad\text{(by \eqref{eqn:trigdouble})}
\end{align*}

Next, we compute the ratio of the resultants to obtain
\begin{align*}
\frac{\res(\Psi_m(x),\Psi_n(x))}{\res(\Phi_m(x),\Phi_n(x))}&=\frac{\displaystyle\prod_{\substack{1\leq s\leq m-1\\1\leq t\leq n-1\\\gcd(s,m)=\gcd(t,n)=1}}\hspace{-15pt}\sin\left(\frac{s\pi}{2m}+\frac{t\pi}{2n}\right)}{\displaystyle\prod_{\substack{1\leq s\leq m-1\\1\leq t\leq n-1\\\gcd(s,m)=\gcd(t,n)=1}}\hspace{-15pt}\cos\left(\frac{s\pi}{2m}-\frac{t\pi}{2n}\right)}\\
&=\frac{\displaystyle\prod_{\substack{1\leq s\leq m-1\\1\leq t\leq n-1\\\gcd(s,m)=\gcd(t,n)=1}}\hspace{-15pt}\sin\left(\frac{s\pi}{2m}+\frac{t\pi}{2n}\right)}{\displaystyle\prod_{\substack{1\leq s\leq m-1\\1\leq t\leq n-1\\\gcd(s,m)=\gcd(t,n)=1}}\hspace{-15pt}\sin\left(\frac{(m-s)\pi}{2m}+\frac{t\pi}{2n}\right)}\qquad\text{(by \eqref{eqn:trigcomplementary})}\\
&=1.
\end{align*}
\end{proof}

The following is a corollary of Theorems~\ref{thm:Phires} and \ref{thm:Psi/Phires}, which expresses $\res(\Psi_m(x),\Psi_n(x))$ as a closed form in a similar manner to $\res(\Phi_m(x),\Phi_n(x))$.

\begin{corollary}\label{cor:resultant}
Let $2\leq m<n$ be integers. Then
$$\res(\Psi_m(x),\Psi_n(x))=\begin{cases}
p^{\varphi(m)}&\text{if $n/m=p^\alpha$ for some prime $p$ and some positive integer $\alpha$};\\
1&\text{otherwise}.
\end{cases}$$
\end{corollary}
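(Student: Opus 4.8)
The plan is to derive this corollary as an immediate consequence of the two results that precede it, exactly mirroring the way Corollary~\ref{cor:discriminant} follows from Theorems~\ref{thm:Phidiscrim} and \ref{thm:Psi/Phidiscrim}. The essential work has already been carried out in Theorem~\ref{thm:Psi/Phires}, which establishes the key fact that the fibotomic resultant coincides exactly with its cyclotomic counterpart; the corollary merely records the explicit closed form that results from pairing this with Lehmer's evaluation.

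First I would invoke Theorem~\ref{thm:Psi/Phires} to conclude that
$$\res(\Psi_m(x),\Psi_n(x))=\res(\Phi_m(x),\Phi_n(x)),$$
since the ratio quoted there equals $1$. Second, I would substitute the explicit evaluation of the cyclotomic resultant supplied by Lehmer in Theorem~\ref{thm:Phires}. This substitution is purely mechanical: the two cases, namely $n/m$ being a prime power versus $n/m$ not being a prime power, transfer verbatim, yielding $p^{\varphi(m)}$ in the first case and $1$ in the second. Combining the two displayed facts then gives the stated formula.

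There is no genuine obstacle at this stage. All of the analytic difficulty, namely converting the product of root differences into products of sines and cosines and then showing via the complementary-angle identity \eqref{eqn:trigcomplementary} that the surviving products telescope to $1$, was absorbed into the proof of Theorem~\ref{thm:Psi/Phires}. The only point requiring a moment's care is to confirm that the hypotheses align: Lehmer's theorem is stated for positive integers $m<n$, whereas here we assume $2\leq m<n$, so our range is a subset of his and the formula applies without modification.
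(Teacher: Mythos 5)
Your proposal is correct and matches the paper exactly: the corollary is stated there as an immediate consequence of Theorem~\ref{thm:Psi/Phires} (which gives $\res(\Psi_m(x),\Psi_n(x))=\res(\Phi_m(x),\Phi_n(x))$) combined with Lehmer's evaluation in Theorem~\ref{thm:Phires}, with no further argument supplied or needed. Your remark about the hypothesis range $2\leq m<n$ being a subset of Lehmer's $m<n$ is a fine sanity check, consistent with how the paper treats it.
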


\section{Factorization of $\Psi_n(x)$ in $\mathbb{Z}_p[x]$}\label{sec:factorize}

Throughout this section, let $p$ be a prime and let $\Z_p$ denote the prime field of size $p$. Furthermore, we view $\zeta_n$ as a solution to the congruence $\Phi_n(x)\mod{0}{p}$. Although $\zeta_n$ depends on $p$, we suppress the index $p$ to simplify the notation.  Our goal in this section is to provide the factorization form of $\Psi_n(x)$ in $\mathbb{Z}_p$.  We begin by considering the case $n=p^k$ for some positive integer $k$.

\begin{theorem}\label{thm:Psi_p^k}
Let $k$ be a positive integer. Then
$$\Psi_{p^k}(x)\mod{\big(x^2+4\big)^{\frac{\varphi(p^k)}{2}}}{p}.$$
\end{theorem}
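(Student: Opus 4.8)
The plan is to reduce the identity of Theorem~\ref{thm:Psi=Phi} modulo $p$ and combine it with the standard cyclotomic congruence $\Phi_{p^k}(y)\equiv(y-1)^{\varphi(p^k)}\pmod p$. I would first record this congruence: from the factorization $\Phi_{p^k}(y)\,(y^{p^{k-1}}-1)=y^{p^k}-1$ in $\Z[y]$ together with $y^{p^j}-1\equiv(y-1)^{p^j}\pmod p$, cancelling the factor $(y-1)^{p^{k-1}}$ in the integral domain $\F_p[y]$ yields $\Phi_{p^k}(y)\equiv(y-1)^{p^k-p^{k-1}}=(y-1)^{\varphi(p^k)}\pmod p$.

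Next I would work in the Laurent polynomial ring $\F_p[\omega,\omega^{-1}]$, treating $\omega$ as a formal invertible indeterminate and setting $x:=\omega-\omega^{-1}$. By Remark~\ref{rmk:Psi=Phi}, for $p^k\geq3$ the exact identity $\omega^{\varphi(p^k)}\Psi_{p^k}(x)=\Phi_{p^k}(-\omega^2)$ holds, and reducing its coefficients modulo $p$ and substituting $y=-\omega^2$ into the cyclotomic congruence gives
$$\omega^{\varphi(p^k)}\Psi_{p^k}(x)\equiv(-\omega^2-1)^{\varphi(p^k)}\pmod p.$$
Dividing by the unit $\omega^{\varphi(p^k)}$ produces $\Psi_{p^k}(x)\equiv(-\omega-\omega^{-1})^{\varphi(p^k)}$. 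Since $\varphi(p^k)=p^{k-1}(p-1)$ is even for every $p^k\geq3$, I would then invoke the purely algebraic identity $(\omega+\omega^{-1})^2=(\omega-\omega^{-1})^2+4\omega\omega^{-1}=x^2+4$, which follows from \eqref{eqn:omega-omegainv} with \emph{no} division by $2$, to rewrite the right-hand side as $\big((\omega+\omega^{-1})^2\big)^{\varphi(p^k)/2}=(x^2+4)^{\varphi(p^k)/2}$. Because $\omega-\omega^{-1}$ is a nonconstant, hence transcendental, element of $\F_p[\omega,\omega^{-1}]$, the substitution $x\mapsto\omega-\omega^{-1}$ embeds $\F_p[x]$ into the Laurent ring, so the displayed congruence descends to the desired identity $\Psi_{p^k}(x)\equiv(x^2+4)^{\varphi(p^k)/2}\pmod p$ in $\F_p[x]$.

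The step demanding the most care is the prime $p=2$. The defining formula $\omega=(x+\sqrt{x^2+4})/2$ carries a denominator of $2$, so the reduction must be routed entirely through the algebraic relation $\omega-\omega^{-1}=x$ in the Laurent ring rather than through the explicit radical expression; this is precisely what lets the computation survive in characteristic $2$. The genuinely degenerate case is the single value $p^k=2$, where $\varphi(2)/2$ is not an integer and the formula is not literally defined; here I would verify the statement by direct computation, using $\Psi_2(x)=x$ and $x^2+4\equiv x^2\pmod2$. Every remaining case is covered uniformly by the argument above.
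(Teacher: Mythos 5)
Your proposal is correct, and its backbone is the same as the paper's: both pass through Theorem~\ref{thm:Psi=Phi} (in the form of Remark~\ref{rmk:Psi=Phi}) and then reduce modulo $p$ via the Frobenius identity. The organization of the middle step differs, though. The paper writes $\Phi_{p^k}(-\omega^2)=\Phi_p\big((-\omega^2)^{p^{k-1}}\big)$, expands $\Phi_p(y)=(y^p-1)/(y-1)$ to express $\Psi_{p^k}(x)$ as the quotient $\big((-\omega)^{p^k}-\omega^{-p^k}\big)\big/\big((-\omega)^{p^{k-1}}-\omega^{-p^{k-1}}\big)$, and applies the freshman's dream to numerator and denominator separately; you instead prove the one-shot congruence $\Phi_{p^k}(y)\equiv(y-1)^{\varphi(p^k)}\pmod{p}$ in $\F_p[y]$ and substitute $y=-\omega^2$, collapsing the telescoping quotient. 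Two features of your write-up are actually more careful than the paper's own proof: you work formally in $\F_p[\omega,\omega^{-1}]$ with $x=\omega-\omega^{-1}$ and justify descent to $\F_p[x]$ by transcendence, avoiding the definition $\omega=(x+\sqrt{x^2+4})/2$, which divides by $2$ and is problematic precisely at $p=2$; and you finish with the division-free identity $(\omega+\omega^{-1})^2=x^2+4$ rather than the paper's appeal to $\omega+\omega^{-1}=\sqrt{x^2+4}$, which has no clear meaning in characteristic $p$. Both treatments dispose of the degenerate case $p^k=2$ by inspection. The only point worth making explicit is that Remark~\ref{rmk:Psi=Phi} is a priori an identity of real functions, so lifting it to $\Z[\omega,\omega^{-1}]$ needs the easy observation that two Laurent polynomials agreeing at infinitely many real values of $\omega$ coincide (equivalently, that the induction identity \eqref{eqn:sum=Fd} is itself formal).
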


\begin{proof}
If $p=2$ and $k=1$, then the identity holds trivially. Otherwise, notice that
$$\Psi_{p^k}(x)=\frac{\Phi_{p^k}(-\omega^2)}{\omega^{\varphi(p^k)}}=\frac{\Phi_p\Big(\big(-\omega^2\big)^{p^{k-1}}\Big)}{\omega^{\varphi(p^k)}}=\frac{\big(-\omega^2\big)^{p^k}-1}{\big(-\omega^2\big)^{p^{k-1}}-1}\cdot\frac{1}{\omega^{p^k-p^{k-1}}}=\frac{(-\omega)^{p^k}-\omega^{-p^k}}{(-\omega)^{p^{k-1}}-\omega^{-p^{k-1}}},$$
where the first equality is due to Theorem~\ref{thm:Psi=Phi}, the second equality is due to repeated application of equation~\eqref{eqn:Phi_pm,p|m}, and the third equality follows from $\Phi_p(x)=\frac{x^p-1}{x-1}$. As a result,
$$\Psi_{p^k}(x)=\frac{(-\omega)^{p^k}-\omega^{-p^k}}{(-\omega)^{p^{k-1}}-\omega^{-p^{k-1}}}\equiv\frac{(-\omega-\omega^{-1})^{p^k}}{(-\omega-\omega^{-1})^{p^{k-1}}}\mod{(-\omega-\omega^{-1})^{\varphi(p^k)}}{p},$$
and the theorem follows by noticing that $\varphi(p^k)$ is even and $\omega+\omega^{-1}=\sqrt{x^2+4}$.
\end{proof}

We next relate the factorization of $\Psi_n(x)$ to $\Psi_m(x)$, where $n=p^km$ and $\gcd(p,m)=1$.

\begin{theorem}\label{thm:Psi_p^km}
Let $m\geq2$ be an integer such that $\gcd(p,m)=1$, and let $k$ be a nonnegative integer. Then
$$\Psi_{p^km}(x)\mod{\Psi_m(x)^{\varphi(p^k)}}{p}.$$
\end{theorem}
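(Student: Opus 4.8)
The plan is to mirror the reduction used for the discriminant and resultant: rewrite both sides through the cyclotomic dictionary of Theorem~\ref{thm:Psi=Phi} and then exploit the Frobenius endomorphism in characteristic $p$. The case $k=0$ is immediate since $\varphi(1)=1$, so I assume $k\geq1$; note that then $p^km\geq6\geq3$, so $\Psi_{p^km}$ is governed by the $n\geq3$ branch of Theorem~\ref{thm:Psi=Phi}, while $\Psi_m$ (with $m\geq2$) is covered by Remark~\ref{rmk:Psi=Phi}. All computations will take place in the rational function field $\F_p(\omega)$, in which $x=\omega-\omega^{-1}$ by \eqref{eqn:omega-omegainv}; every quantity appearing ($\omega$, $\Phi_m(-\omega^2)$, and so on) is a nonzero element there because the relevant cyclotomic polynomials remain monic of positive degree modulo $p$.

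First I would establish the cyclotomic input by iterating Theorem~\ref{thm:cyclotomicidentity}: for $p\nmid m$ and $k\geq1$,
\begin{equation*}
\Phi_{p^km}(y)=\frac{\Phi_m\!\left(y^{p^k}\right)}{\Phi_m\!\left(y^{p^{k-1}}\right)},
\end{equation*}
which I prove by induction on $k$, the base case being the $p\nmid m$ identity and the inductive step using \eqref{eqn:Phi_pm,p|m} since $p\mid p^km$ once $k\geq1$. Substituting $y=-\omega^2$ and dividing by $\omega^{\varphi(p^km)}$, Theorem~\ref{thm:Psi=Phi} converts this into
\begin{equation*}
\Psi_{p^km}(\omega-\omega^{-1})=\frac{\Phi_m\!\left((-\omega^2)^{p^k}\right)}{\Phi_m\!\left((-\omega^2)^{p^{k-1}}\right)\,\omega^{\varphi(p^km)}}.
\end{equation*}

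The heart of the argument is the Freshman's dream: since $\Phi_m\in\F_p[z]$, we have $\Phi_m(z^{p^j})=\Phi_m(z)^{p^j}$ in $\F_p[z]$, and substituting $z=-\omega^2$ gives $\Phi_m\!\left((-\omega^2)^{p^j}\right)=\Phi_m(-\omega^2)^{p^j}$ in $\F_p(\omega)$. Hence the expression above collapses to
\begin{equation*}
\frac{\Phi_m(-\omega^2)^{\,p^k-p^{k-1}}}{\omega^{\varphi(p^km)}}=\frac{\Phi_m(-\omega^2)^{\varphi(p^k)}}{\omega^{\varphi(m)\varphi(p^k)}},
\end{equation*}
using $p^k-p^{k-1}=\varphi(p^k)$ and $\varphi(p^km)=\varphi(m)\varphi(p^k)$. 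This is precisely the $\varphi(p^k)$-th power of the expression Remark~\ref{rmk:Psi=Phi} gives for $\Psi_m(\omega-\omega^{-1})$; the sign $(-1)^{\varphi(p^k)}$ that would appear when $m=2$ is harmless, because $\gcd(p,2)=1$ forces $p$ odd and hence $\varphi(p^k)=p^{k-1}(p-1)$ even. Thus $\Psi_{p^km}(\omega-\omega^{-1})=\Psi_m(\omega-\omega^{-1})^{\varphi(p^k)}$ as elements of $\F_p(\omega)$, and since the substitution $x\mapsto\omega-\omega^{-1}$ embeds $\F_p[x]$ into $\F_p(\omega)$ (the image of $x$ is transcendental over $\F_p$), this is equivalent to the claimed congruence $\Psi_{p^km}(x)\equiv\Psi_m(x)^{\varphi(p^k)}\pmod p$.

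The main obstacle is not any single computation but the bookkeeping needed to make the passage to characteristic $p$ rigorous: one must confirm that the identities of Theorem~\ref{thm:Psi=Phi} hold as genuine identities in $\Z[\omega,\omega^{-1}]$ (not merely for real $x$), so that reduction modulo $p$ is legitimate, and that every denominator stays nonzero in $\F_p(\omega)$. Tracking the $m=2$ sign and verifying the transcendence of $\omega-\omega^{-1}$ over $\F_p$ (hence the injectivity that lets me return from an $\omega$-identity to an $x$-congruence) are the remaining points requiring care, but both become routine once the ambient field $\F_p(\omega)$ is fixed.
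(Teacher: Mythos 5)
Your proposal is correct, and it takes a recognizably different route from the paper's. The paper first packages the cyclotomic substitution identities into fibotomic ones (Theorem~\ref{thm:Psi_pm}) and then proves the congruence by a case split: for $p>2$ it iterates parts $(c)$ and $(d)$ of Theorem~\ref{thm:Psi_pm} to write $\Psi_{p^km}(x)=\Psi_m\left(\omega^{p^k}-\omega^{-p^k}\right)/\Psi_m\left(\omega^{p^{k-1}}-\omega^{-p^{k-1}}\right)$ and then applies Frobenius to $\omega-\omega^{-1}$, while for $p=2$ it runs a separate argument through $\Psi_{2^km}(x)\equiv\Psi_m\left(x^{2^{k-1}}\right)\pmod 2$, using that the only fourth root of unity over $\Z_2$ is $1$. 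You bypass Theorem~\ref{thm:Psi_pm} entirely: you iterate Theorem~\ref{thm:cyclotomicidentity} to get $\Phi_{p^km}(y)=\Phi_m\left(y^{p^k}\right)/\Phi_m\left(y^{p^{k-1}}\right)$ and apply the Freshman's dream to $\Phi_m$ itself, which is the same underlying mechanism but applied one level lower, at the cyclotomic rather than fibotomic layer. What your route buys is twofold. First, uniformity: there is no $p=2$ versus $p>2$ split, since in characteristic $2$ the signs are vacuous and your computation goes through verbatim. Second, rigor beyond what the paper supplies: you correctly identify that Theorem~\ref{thm:Psi=Phi} must be promoted to a Laurent-polynomial identity in $\Z[\omega,\omega^{-1}]$ before reduction modulo $p$ is meaningful (it is, since $\omega^{\varphi(n)}\Psi_n(\omega-\omega^{-1})$ and $\pm\Phi_n(-\omega^2)$ are Laurent polynomials over $\Z$ agreeing for all $\omega\in(0,\infty)$), that transcendence of $-\omega^2$ over $\F_p$ keeps every denominator nonzero, and that transcendence of $\omega-\omega^{-1}$ makes $x\mapsto\omega-\omega^{-1}$ an embedding of $\F_p[x]$ into $\F_p(\omega)$, legitimizing the return from an $\omega$-identity to the claimed $x$-congruence --- all points the paper leaves implicit (its $\omega=\frac{x+\sqrt{x^2+4}}{2}$ does not even literally make sense in characteristic $2$). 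Your sign bookkeeping at $m=2$ is exactly right: $\gcd(p,2)=1$ forces $p$ odd, so $(-1)^{\varphi(p^k)}=1$ for $k\geq1$. What the paper's route buys is reuse: Theorem~\ref{thm:Psi_pm} is established anyway for its own sake, making the paper's proof a short corollary, whereas you must redo the cyclotomic manipulation it encapsulates. Note finally that your argument actually yields the exact equality $\Psi_{p^km}=\Psi_m^{\varphi(p^k)}$ in $\F_p[x]$, which is precisely the stated congruence.
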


\begin{proof}
This theorem holds trivially if $k=0$, so we assume for the rest of the proof that $k>0$. If $p=2$, then note that the only fourth root of unity over $\Z_2$ is $1$. Hence, $$\Psi_{2^km}(x)\equiv\Psi_{2m}\left(x^{2^{k-1}}\right)\equiv\Psi_m\left(x^{2^{k-1}}\right)\equiv\Psi_m(x)^{2^{k-1}}\mod{\Psi_m(x)^{\varphi(2^k)}}{2},$$
where the first equality is due to repeated application of Theorem~\ref{thm:Psi_pm}$(\ref{thm:Psi_2m,2|m})$, and the second equality is due to Theorem~\ref{thm:Psi_pm}$(\ref{thm:Psi_2m,2not|m})$.

If $p>2$, then 
\begin{align*}
\Psi_{p^km}(x)
&=\Psi_{pm}\left(\omega^{p^{k-1}}-\omega^{-p^{k-1}}\right)\\
&=\frac{\Psi_m\left(\omega^{p^k}-\omega^{-p^k}\right)}{\Psi_m\left(\omega^{p^{k-1}}-\omega^{-p^{k-1}}\right)}\\
&\equiv\frac{\Psi_m\left(\omega-\omega^{-1}\right)^{p^k}}{\Psi_m\left(\omega-\omega^{-1}\right)^{p^{k-1}}}\\
&\equiv\frac{\Psi_m(x)^{p^k}}{\Psi_m(x)^{p^{k-1}}}\mod{\Psi_m(x)^{\varphi(p^k)}}{p},
\end{align*}
where the first equality is due to repeated application of Theorem~\ref{thm:Psi_pm}$(\ref{thm:Psi_pm,p|m})$, the second equality is due to Theorem~\ref{thm:Psi_pm}$(\ref{thm:Psi_pm,pnot|m})$, and the second congruence is by \eqref{eqn:omega-omegainv}.
\end{proof}

Theorem~\ref{thm:Psi_p^km} allows us to focus our attention on the factorization of $\Psi_m(x)$ in $\Z_p[x]$ when $\gcd(p,m)=1$.  To move in this direction, we first present the following two lemmas.

\begin{lemma}\label{lem:galois}
Let $\alpha$ be algebraic (and thus separable) over $\Z_p$. Then the degree of the minimal polynomial of $\alpha$ over $\Z_p$ is the smallest positive integer $\delta$ such that $\alpha^{p^\delta}=\alpha$.
\end{lemma}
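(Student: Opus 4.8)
The plan is to exploit the Frobenius map $\phi\colon\beta\mapsto\beta^p$, which is the engine behind all finite-field arithmetic. Since $\alpha$ is algebraic over $\Z_p$, it lies in some finite extension field $K$, and on $K$ the map $\phi$ is a field automorphism fixing $\Z_p$ pointwise (the latter by Fermat's little theorem, $c^p=c$ for every $c\in\Z_p$). Let $\delta$ be the smallest positive integer with $\alpha^{p^\delta}=\alpha$, i.e.\ $\phi^\delta(\alpha)=\alpha$; such a $\delta$ exists because the forward orbit $\{\alpha,\phi(\alpha),\phi^2(\alpha),\dots\}$ sits inside the finite field $K$ and $\phi$ is injective, so the orbit is purely periodic. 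I will show that this $\delta$ equals the degree of the minimal polynomial of $\alpha$ over $\Z_p$.

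First I would exhibit a monic degree-$\delta$ polynomial over $\Z_p$ that is annihilated by $\alpha$, namely
\[
g(x)=\prod_{i=0}^{\delta-1}\bigl(x-\alpha^{p^i}\bigr).
\]
The roots $\alpha,\alpha^p,\dots,\alpha^{p^{\delta-1}}$ are pairwise distinct: if $\phi^i(\alpha)=\phi^j(\alpha)$ with $0\leq i<j\leq\delta-1$, then injectivity of $\phi$ gives $\alpha^{p^{j-i}}=\alpha$ with $0<j-i<\delta$, contradicting the minimality of $\delta$. Applying $\phi$ cyclically permutes these $\delta$ roots, so $\phi$ fixes each elementary symmetric function of them, and hence each coefficient of $g$ lies in the fixed field of $\phi$, which is exactly $\Z_p$. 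Thus $g\in\Z_p[x]$ with $g(\alpha)=0$, and in particular the minimal polynomial of $\alpha$ divides $g$.

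Next I would establish the reverse divisibility to pin down the degree. If $h\in\Z_p[x]$ satisfies $h(\alpha)=0$, then applying $\phi$ and using that $\phi$ fixes the coefficients of $h$ yields $h\bigl(\alpha^{p^i}\bigr)=0$ for every $i$; hence all $\delta$ distinct roots of $g$ are roots of $h$, so $g\mid h$. Taking $h$ to be the minimal polynomial shows $g$ divides it as well. The two polynomials are therefore associates, and since both are monic they are equal. Consequently the minimal polynomial is $g$, of degree $\delta$, as claimed.

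The only genuinely delicate step is verifying that $g$ has coefficients in $\Z_p$ rather than merely in $K$; this is the Frobenius-invariance argument, and it is where separability (granted in the hypothesis) enters, ensuring the conjugates $\alpha^{p^i}$ are distinct so that $g$ is a true degree-$\delta$ polynomial and the coefficients are pinned to the prime field. Everything else is the standard orbit-counting argument for the Frobenius action, requiring no further computation.
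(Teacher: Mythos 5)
Your proof is correct and takes essentially the same route as the paper's: both identify the minimal polynomial as $\prod_{j=0}^{\delta-1}\left(x-\alpha^{p^j}\right)$, the product over the Frobenius orbit of $\alpha$, with the paper compressing this into a citation of the fact that Frobenius generates the Galois group of $\Z_p(\alpha)/\Z_p$ while you verify the needed facts (distinctness of the orbit, $\Z_p$-rationality of the coefficients, two-way divisibility) directly. One harmless slip: in your closing paragraph you attribute the distinctness of the conjugates to separability, but you had already correctly derived it from the minimality of $\delta$ together with the injectivity of the Frobenius map, and that is the argument that actually carries the proof.
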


\begin{proof}
Since the Frobenius map $x\mapsto x^p$ is a generator of the Galois group of $\Z_p(\alpha)/\Z_p$ and $\delta$ is the smallest positive integer such that $\alpha^{p^\delta}=\alpha$, we know that $\left\{\alpha^{p^j}:0\leq j\leq\delta-1\right\}$ is the smallest subset of $\Z_p(\alpha)$ that contains $\alpha$ and is fixed by the Galois group. As a result,
$$\prod_{j=0}^{\delta-1}\left(x-\alpha^{p^j}\right)$$
is the minimal polynomial of $\alpha$ over $\Z_p$.
\end{proof}

\begin{lemma}\label{lem:st}
Let $s$, $t$, and $m$ be positive integers. If $\gcd(p,2m)=1$, then \begin{enumerate}[\indent$(a)$]
\item\label{item:st=} $\zeta_{2m}^s+\zeta_{2m}^{-s}=\zeta_{2m}^t+\zeta_{2m}^{-t}$ if and only if $s\mod{\pm t}{2m}$; and
\item\label{item:st=-} $\zeta_{2m}^s+\zeta_{2m}^{-s}=-\left(\zeta_{2m}^t+\zeta_{2m}^{-t}\right)$ if and only if $s\mod{m\pm t}{2m}$.
\end{enumerate}
\end{lemma}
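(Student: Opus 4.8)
The plan is to reduce both equivalences to elementary statements about powers of the single root of unity $\zeta:=\zeta_{2m}$. Since $\gcd(p,2m)=1$, the polynomial $x^{2m}-1$ is separable over $\Z_p$, so $\zeta$ is a genuine primitive $2m$-th root of unity living in the splitting field of $x^{2m}-1$, and there it satisfies $\zeta^k=1$ if and only if $2m\mid k$. First I would record two facts for repeated use: this characterization of when a power of $\zeta$ equals $1$, and the identity $\zeta^m=-1$. The latter holds because $\zeta^m$ has multiplicative order $2m/\gcd(m,2m)=2$, and since $2\mid 2m$ forces $p$ to be odd, the unique element of order $2$ in the field is $-1$. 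All subsequent algebra takes place in the field $\Z_p(\zeta)$, which is an integral domain, so products vanish only when a factor does.

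For part~(\ref{item:st=}) I would set $u=\zeta^s$ and $v=\zeta^t$, both nonzero since they are roots of unity. Clearing denominators in $u+u^{-1}=v+v^{-1}$ and factoring yields $(u-v)(uv-1)=0$, so $u=v$ or $uv=1$, that is $\zeta^{s-t}=1$ or $\zeta^{s+t}=1$. By the characterization above these read exactly $s\equiv t$ or $s\equiv -t\pmod{2m}$, which together say $s\mod{\pm t}{2m}$. Because multiplying by $uv\neq0$ is reversible and the field has no zero divisors, every implication in this chain is an equivalence, so the stated biconditional follows at once.

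For part~(\ref{item:st=-}) the same substitution turns $\zeta^s+\zeta^{-s}=-(\zeta^t+\zeta^{-t})$, after clearing denominators, into $(u+v)(uv+1)=0$, giving $u=-v$ or $uv=-1$. This is where the identity $\zeta^m=-1$ does the work: the condition $u=-v$ becomes $\zeta^s=\zeta^{m+t}$ and the condition $uv=-1$ becomes $\zeta^{s+t}=\zeta^m$, which translate respectively to $s\equiv m+t$ and $s\equiv m-t\pmod{2m}$, i.e. $s\mod{m\pm t}{2m}$. As before, every step is reversible, so both directions are handled simultaneously.

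I expect no serious obstacle here; the proof is essentially routine polynomial algebra over a field. The only points requiring genuine care are identifying the correct ambient field so that the factorizations $(u-v)(uv-1)$ and $(u+v)(uv+1)$ may be split using the absence of zero divisors, and justifying $\zeta^m=-1$, which is precisely where the hypothesis $\gcd(p,2m)=1$ (forcing $p$ odd, so $-1\neq1$) is invoked.
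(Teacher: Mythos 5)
Your proof is correct and takes essentially the same approach as the paper: your factorizations $(u-v)(uv-1)=0$ and $(u+v)(uv+1)=0$ are, up to multiplication by the unit $uv=\zeta_{2m}^{s+t}$, exactly the paper's identities $\left(\zeta_{2m}^{s-t}-1\right)\left(\zeta_{2m}^{t}-\zeta_{2m}^{-s}\right)$ and $\left(\zeta_{2m}^{s-t}+1\right)\left(\zeta_{2m}^{t}+\zeta_{2m}^{-s}\right)$, with both arguments concluding via the absence of zero divisors in $\Z_p(\zeta_{2m})$. The only difference is that you make explicit the separability of $x^{2m}-1$ and the justification that $\zeta_{2m}^{m}=-1$, points the paper's terse proof leaves implicit.
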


\begin{proof}
Since
$$\zeta_{2m}^s+\zeta_{2m}^{-s}-\left(\zeta_{2m}^t+\zeta_{2m}^{-t}\right)=\left(\zeta_{2m}^{s-t}-1\right)\left(\zeta_{2m}^t-\zeta_{2m}^{-s}\right)$$ and $$\zeta_{2m}^s+\zeta_{2m}^{-s}+\zeta_{2m}^t+\zeta_{2m}^{-t}=\left(\zeta_{2m}^{s-t}+1\right)\left(\zeta_{2m}^t+\zeta_{2m}^{-s}\right),$$ our conclusion follows.
\end{proof}

Using Lemmas~\ref{lem:galois} and \ref{lem:st}, we establish the following theorem when $p$ is an odd prime.

\begin{theorem}\label{thm:fullchar}
Let $p>2$ and let $m\geq3$ be an integer such that $\gcd(p,m)=1$. Let $u$ be the order of $p$ modulo $2m$, i.e., $u$ is the smallest positive integer such that $p^{u}\mod{1}{2m}$. Further, let $\delta$ be the degree of the minimal polynomial of $\zeta_4\left(\zeta_{2m}^s+\zeta_{2m}^{-s}\right)$ over $\Z_p$, where $\gcd(s,m)=1$. Then
\begin{itemize}
\item $\delta=\frac{u}{2}$ if
\begin{itemize}
\item[\footnotesize\ding{117}] $p\mod{1}{4}$, $u$ is even, and $p^{\frac{u}{2}}\mod{-1}{2m}$;
\item[\footnotesize\ding{117}] $p\mod{3}{4}$, $u\mod{0}{4}$, and $p^{\frac{u}{2}}\mod{-1}{2m}$; or
\item[\footnotesize\ding{117}] $p\mod{3}{4}$, $u\mod{2}{4}$, and $p^{\frac{u}{2}}\mod{m\pm1}{2m}$;
\end{itemize}
\item $\delta=u$ if
\begin{itemize}
\item[\footnotesize\ding{117}] $p\mod{1}{4}$, $u$ is even, and $p^{\frac{u}{2}}\not\mod{-1}{2m}$;
\item[\footnotesize\ding{117}] $p\mod{1}{4}$, $u$ is odd;
\item[\footnotesize\ding{117}] $p\mod{3}{4}$, $u\mod{0}{4}$, and $p^{\frac{u}{2}}\not\mod{-1}{2m}$; or
\item[\footnotesize\ding{117}] $p\mod{3}{4}$, $u\mod{2}{4}$, and $p^{\frac{u}{2}}\not\mod{m\pm1}{2m}$;
\end{itemize}
\item $\delta=2u$ if $p\mod{3}{4}$ and $u$ is odd.
\end{itemize}
\end{theorem}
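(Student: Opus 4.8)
The plan is to apply Lemma~\ref{lem:galois} to $\alpha=\zeta_4\left(\zeta_{2m}^s+\zeta_{2m}^{-s}\right)$, so that $\delta$ is the least positive integer with $\alpha^{p^\delta}=\alpha$, and then to convert this identity into an explicit congruence condition on $p^\delta$ modulo $2m$. First I would reduce to the case $\gcd(s,2m)=1$: this already holds when $m$ is even, and when $m$ is odd with $s$ even I would replace $s$ by $m-s$, which is coprime to $2m$ and sends $\alpha$ to $-\alpha$, an element with the same minimal-polynomial degree over $\Z_p$ since $-1\in\Z_p$. This substitution also shows that $\delta$ is well defined, i.e.\ independent of the admissible choice of $s$.

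Next, since the Frobenius map is an additive field homomorphism on the algebraic closure of $\Z_p$, I would compute
$$\alpha^{p^\delta}=\zeta_4^{p^\delta}\left(\zeta_{2m}^{sp^\delta}+\zeta_{2m}^{-sp^\delta}\right),\qquad \zeta_4^{p^\delta}=(-1)^{(p^\delta-1)/2}\zeta_4,$$
using $\zeta_4^2=-1$ and that $p^\delta$ is odd. Cancelling $\zeta_4$, the equation $\alpha^{p^\delta}=\alpha$ becomes $(-1)^{(p^\delta-1)/2}\left(\zeta_{2m}^{sp^\delta}+\zeta_{2m}^{-sp^\delta}\right)=\zeta_{2m}^s+\zeta_{2m}^{-s}$. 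Splitting on $p^\delta\bmod 4$ and invoking Lemma~\ref{lem:st}$(\ref{item:st=})$ when $p^\delta\mod{1}{4}$ and Lemma~\ref{lem:st}$(\ref{item:st=-})$ when $p^\delta\mod{3}{4}$, together with $\gcd(s,2m)=1$ to cancel $s$, this says precisely that $\alpha^{p^\delta}=\alpha$ if and only if
$$\textup{(C)}\qquad\begin{cases}p^\delta\mod{\pm1}{2m}&\text{if }p^\delta\mod{1}{4},\\ p^\delta\mod{m\pm1}{2m}&\text{if }p^\delta\mod{3}{4}.\end{cases}$$
I would then record the elementary fact that $p^\delta\mod{1}{4}$ exactly when $\delta$ is even or $p\mod{1}{4}$, and $p^\delta\mod{3}{4}$ exactly when $\delta$ is odd and $p\mod{3}{4}$.

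It remains to find the least $\delta$ satisfying (C), which is the number-theoretic heart of the argument. When $p\mod{1}{4}$, condition (C) is simply $p^\delta\mod{\pm1}{2m}$ for every $\delta$, so $\delta=u/2$ if $u$ is even with $p^{u/2}\mod{-1}{2m}$, and $\delta=u$ otherwise, recovering the three $p\mod{1}{4}$ bullets. When $p\mod{3}{4}$ I would treat even and odd $\delta$ separately: among even $\delta$ the requirement is $p^\delta\mod{\pm1}{2m}$, while among odd $\delta$ it is $p^\delta\mod{m\pm1}{2m}$. The crucial structural input is that $\langle p\rangle\leq(\Z/2m\Z)^\times$ is cyclic of order $u$, so its only element of order two (when $u$ is even) is $p^{u/2}$, and any occurrence of $p^{u/2}$ happens only at exponents congruent to $u/2$ modulo $u$, all of the same parity as $u/2$. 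Tracking this parity against the parity forced on $\delta$ by (C) is what separates the case $u\mod{0}{4}$ (where $p^{u/2}$ sits at even exponents, so a saving to $u/2$ can only come from $p^{u/2}\mod{-1}{2m}$) from the case $u\mod{2}{4}$ (where $p^{u/2}$ sits at odd exponents, so a saving to $u/2$ can only come from $p^{u/2}\mod{m\pm1}{2m}$), and forces $\delta=2u$ when $u$ is odd, since then $\langle p\rangle$ has no element of order two and the least even exponent with $p^\delta\mod{1}{2m}$ is $2u$. Assembling these subcases yields exactly the remaining bullets.

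The main obstacle, as indicated, is this last step: one must carefully reconcile the parity constraint that $p\bmod 4$ places on the exponent $\delta$ with the location of the order-two elements $-1$ and $m\pm1$ inside the cyclic group $\langle p\rangle$. A subtle point worth isolating is that whether $-1$ and $m\pm1$ are units of order two modulo $2m$ depends on the parity of $m$ (for $m$ odd, $m\pm1$ are non-units while $-1\mod{1}{4}$; for $m$ even the reverse holds), so several of the listed conditions are automatically vacuous for one parity of $m$. Verifying that the stated disjunction nonetheless produces the correct least $\delta$ in every case is where the bookkeeping must be done with care.
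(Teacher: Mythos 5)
Your proposal is correct, and its skeleton matches the paper's proof: both apply Lemma~\ref{lem:galois} together with Lemma~\ref{lem:st} to translate $\alpha^{p^\delta}=\alpha$ into the conditions ($p^\delta\mod{1}{4}$ and $sp^\delta\mod{\pm s}{2m}$) or ($p^\delta\mod{3}{4}$ and $sp^\delta\mod{m\pm s}{2m}$), and then hunt for the least admissible exponent. You diverge in two places, both defensibly to your advantage. First, where the paper splits into Case 1 ($\gcd(s,2m)=1$) and a separate Case 2 ($m$ odd, $s$ even) --- re-deriving the conditions modulo $m$ and proving that the order of $p$ modulo $m$ equals $u$ --- you eliminate Case 2 outright via the symmetry $s\mapsto m-s$, which by Lemma~\ref{lem:st}$(\ref{item:st=-})$ sends $\alpha$ to $-\alpha$ and hence preserves the degree of the minimal polynomial; this is shorter and makes the well-definedness of $\delta$ transparent, and it is consistent with the paper's Case 2 because for odd $m$ one has $p^\delta\mod{\pm1}{2m}$ if and only if $p^\delta\mod{\pm1}{m}$. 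Second, instead of the paper's divisibility sandwich ($\delta\mid2u$ since $p^{2u}\mod{1}{4}$ and $p^{2u}\mod{1}{2m}$, and $u\mid2\delta$ since $(p^\delta)^2$ is $1$ or $m^2+1$ modulo $2m$, the latter forcing $m$ even and $m^2+1\mod{1}{2m}$), you observe that $-1$ and $m\pm1$ square to $1$, so any $\delta<u$ satisfying your condition (C) forces $p^\delta$ to be the unique order-two element $p^{u/2}$ of the cyclic group $\langle p\rangle$, occurring only at exponents $\equiv u/2\pmod{u}$; matching the parity of $u/2$ against the parity that $p^\delta\bmod4$ imposes on $\delta$ then sorts all the bullets exactly as in the paper. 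Two small points to tighten when writing this up: cancelling $s$ in $sp^\delta\mod{m\pm s}{2m}$ is not a bare cancellation --- one gets $p^\delta\equiv s^{-1}m\pm1$, and one needs $s^{-1}$ odd (automatic, since $s$ is odd when $\gcd(s,2m)=1$) to conclude $s^{-1}m\mod{m}{2m}$; and your parenthetical claim ``$-1\mod{1}{4}$'' for odd $m$ is false as written (presumably you meant that the representative $2m-1$ satisfies $2m-1\mod{1}{4}$ when $m$ is odd) --- neither issue affects the argument, since the vacuousness observations (e.g., $p^{\frac{u}{2}}\mod{m\pm1}{2m}$ impossible for odd $m$ by parity, which is exactly the paper's closing remark) hold regardless.
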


\begin{proof}
By Lemma~\ref{lem:galois}, $$\zeta_4^{p^\delta}\left(\zeta_{2m}^{sp^\delta}+\zeta_{2m}^{-sp^\delta}\right)=\zeta_4\left(\zeta_{2m}^s+\zeta_{2m}^{-s}\right).$$ Note that $$\zeta_4^{p^\delta}\left(\zeta_{2m}^{sp^\delta}+\zeta_{2m}^{-sp^\delta}\right)=\pm\zeta_4\left(\zeta_{2m}^{sp^\delta}+\zeta_{2m}^{-sp^\delta}\right).$$ By Lemma~\ref{lem:st}, we have
\begin{equation}\label{eqn:p}
\begin{split}
p^\delta\mod{1}{4}&\text{ and }sp^\delta\mod{\pm s}{2m}\text{, or}\\
p^\delta\mod{3}{4}&\text{ and }sp^\delta\mod{m\pm s}{2m}.
\end{split}
\end{equation}

\textit{Case $1$}: $\gcd(s,2m)=1$. The equations in \eqref{eqn:p} simplify to either $p^\delta\mod{1}{4}$ and $p^\delta\mod{\pm 1}{2m}$, or $p^\delta\mod{3}{4}$ and $p^\delta\mod{m\pm 1}{2m}$.

Since $p^{2u}\mod{1}{4}$ and $p^{2u}\mod{1}{2m}$, we have 
$$\zeta_4^{p^{2u}}\left(\zeta_{2m}^{sp^{2u}}+\zeta_{2m}^{-sp^{2u}}\right)=\zeta_4\left(\zeta_{2m}^s+\zeta_{2m}^{-s}\right),$$
thus $\delta\mid2u$. On the other hand, $p^{2\delta}=(p^\delta)^2$ is congruent to either $1$ or $m^2+1$ modulo $2m$. If $p^{2\delta}\mod{1}{2m}$, then $u\mid2\delta$; if $p^{2\delta}\mod{m^2+1}{2m}$, then note that $p^{2\delta}$ is odd and $2m$ is even, so $m^2+1$ is odd. As a result, $m$ is even and $p^{2\delta}\equiv m^2+1\mod{1}{2m}$, so we again have $u\mid2\delta$. Hence, $\delta\in\left\{\frac{u}{2},u,2u\right\}$.

If $p\mod{1}{4}$, then the smallest positive integer $\delta$ satisfying $p^\delta\mod{\pm1}{2m}$ is $\delta=\frac{u}{2}$ if $p^{\frac{u}{2}}\mod{-1}{2m}$, which implicitly implies that $u$ is even; otherwise, $\delta=u$. If $p\mod{3}{4}$ and $u$ is odd, then $p^{u}\mod{3}{4}$ and $p^{u}\mod{1}{2m}$, so $\delta\neq u$. Also, $\frac{u}{2}$ is not an integer, so $\delta=2u$. Finally, we are left with the case when $p\mod{3}{4}$ and $u$ is even. If $u\mod{0}{4}$ and $p^{\frac{u}{2}}\mod{-1}{2m}$ or $u\mod{2}{4}$ and $p^{\frac{u}{2}}\mod{m\pm1}{2m}$, then $\delta=\frac{u}{2}$; otherwise, $\delta=u$.

\textit{Case $2$}: $\gcd(s,2m)\neq1$. Since $\gcd(s,m)=1$, we deduce that $m$ is odd and $\gcd(s,2m)=2$. As a result, $sp^\delta\mod{m\pm s}{2m}$ in \eqref{eqn:p} will never hold since $sp^\delta$ and $2m$ are even while $m\pm s$ is odd. Hence, \eqref{eqn:p} simplifies to $p^\delta\mod{1}{4}$ and $p^\delta\mod{\pm1}{m}$.

Next, we show that $u$ is the order of $p$ modulo $m$. Let $u'$ be the order of $p$ modulo $m$. Since $p^u\mod{1}{2m}$ implies $p^u\mod{1}{m}$, we have $u'\mid u$. On the other hand, $p^{u'}\mod{1}{2}$ and $p^{u'}\mod{1}{m}$ yield $p^{u'}\mod{1}{2m}$ since $\gcd(2,m)=1$, so we have $u\mid u'$.

With the same argument as in Case 1, we have $\delta\mid2u$. On the other hand, $p^{2\delta}=(p^\delta)^2\mod{1}{m}$, so $u\mid2\delta$. Hence, $\delta\in\left\{\frac{u}{2},u,2u\right\}$. If $p\mod{1}{4}$, then the smallest positive integer $\delta$ satisfying $p^\delta\mod{\pm1}{m}$ is $\delta=\frac{u}{2}$ if $p^{\frac{u}{2}}\mod{-1}{m}$, which implicitly implies that $u$ is even; together with $p^{\frac{u}{2}}\mod{-1}{2}$, we have $p^{\frac{u}{2}}\mod{-1}{2m}$. Otherwise, $\delta=u$. If $p\mod{3}{4}$ and $u$ is odd, then $p^{u}\mod{3}{4}$, so $\delta\neq u$. Also, $\frac{u}{2}$ is not an integer, so $\delta=2u$. Finally, if $p\mod{3}{4}$ and $u$ is even, then the smallest positive integer $\delta$ satisfying $p^\delta\mod{\pm1}{m}$ is $\delta=\frac{u}{2}$ if $p^{\frac{u}{2}}\mod{-1}{m}$ and $\frac{u}{2}$ is even; together with $p^{\frac{u}{2}}\mod{-1}{2}$, we have $p^{\frac{u}{2}}\mod{-1}{2m}$. Otherwise, $\delta=u$.

To complete the proof, we note that $p\mod{3}{4}$, $u\mod{2}{4}$, and $p^{\frac{u}{2}}\mod{m\pm1}{2m}$ will not occur when $m$ is odd. This is because $p^{\frac{u}{2}}$ is odd, while both $m\pm1$ and $2m$ are even.
\end{proof}

To consider the case $p=2$, we first establish a lemma parallel to Lemma~\ref{lem:st}.

\begin{lemma}\label{lem:stp=2}
Let $p=2$, and let $s$, $t$, and $m$ be positive integers. If $\gcd(2,m)=1$, then $\zeta_m^s+\zeta_m^{-s}=\zeta_m^t+\zeta_m^{-t}$ if and only if $s\mod{\pm t}m$.
\end{lemma}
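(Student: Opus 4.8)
The plan is to run the proof of Lemma~\ref{lem:st}$(\ref{item:st=})$ essentially verbatim, with $\zeta_{2m}$ replaced by $\zeta_m$, and to note that passing to characteristic $2$ merges the two parts of Lemma~\ref{lem:st} into the single equivalence stated here. First I would record the algebraic identity
$$\zeta_m^s+\zeta_m^{-s}-\left(\zeta_m^t+\zeta_m^{-t}\right)=\left(\zeta_m^{s-t}-1\right)\left(\zeta_m^t-\zeta_m^{-s}\right),$$
which holds over any field and is checked by expanding the right-hand side. Because $\gcd(2,m)=1$, the polynomial $x^m-1$ is separable over $\Z_2$, so $\zeta_m$ is a genuine primitive $m$-th root of unity; in particular $\zeta_m\neq0$ and $\zeta_m^j=1$ precisely when $m\mid j$.

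With this identity in hand, the equality $\zeta_m^s+\zeta_m^{-s}=\zeta_m^t+\zeta_m^{-t}$ holds if and only if the right-hand product vanishes, and since we are in a field this happens exactly when one of the two factors is zero. The first factor gives $\zeta_m^{s-t}-1=0$, equivalent to $m\mid s-t$, i.e.\ $s\mod{t}{m}$; the second gives $\zeta_m^t=\zeta_m^{-s}$, i.e.\ $\zeta_m^{s+t}=1$, equivalent to $m\mid s+t$, i.e.\ $s\mod{-t}{m}$. Together these two possibilities are exactly the condition $s\mod{\pm t}{m}$, as desired.

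Finally I would point out why no companion statement --- the analog of Lemma~\ref{lem:st}$(\ref{item:st=-})$ --- is required: in $\Z_2$ we have $-1=1$, so $-\left(\zeta_m^t+\zeta_m^{-t}\right)=\zeta_m^t+\zeta_m^{-t}$ and the sum-to-sum and sum-to-negative-sum comparisons coincide. There is no genuine obstacle here; the only point needing attention is that the hypothesis $\gcd(2,m)=1$ is precisely what forces $\zeta_m$ to have multiplicative order exactly $m$, so that the vanishing of each linear factor translates into a clean congruence modulo $m$ rather than a weaker divisibility condition.
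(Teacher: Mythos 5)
Your proof is correct and takes essentially the same approach as the paper: the paper's entire proof consists of the identity $\zeta_m^s+\zeta_m^{-s}-\left(\zeta_m^t+\zeta_m^{-t}\right)=\left(\zeta_m^{s-t}-1\right)\left(\zeta_m^t-\zeta_m^{-s}\right)$ followed by ``our conclusion follows.'' Your added details --- that a factor vanishes iff the corresponding congruence modulo $m$ holds because $\gcd(2,m)=1$ forces $\zeta_m$ to have multiplicative order exactly $m$, and that characteristic $2$ collapses the second case of Lemma~\ref{lem:st} --- merely make explicit what the paper leaves to the reader.
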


\begin{proof}
Since
$$\zeta_m^s+\zeta_m^{-s}-\left(\zeta_m^t+\zeta_m^{-t}\right)=\left(\zeta_m^{s-t}-1\right)\left(\zeta_m^t-\zeta_m^{-s}\right),$$
our conclusion follows.
\end{proof}

Note that when $p=2$, then $\zeta_4=1$. Furthermore, $\left(\zeta_{2m}^m-1\right)^2=\zeta_{2m}^{2m}-1=0$, so $\zeta_{2m}=\zeta_m$. Hence, $\zeta_4\left(\zeta_{2m}^s+\zeta_{2m}^{-s}\right)=\zeta_m^s+\zeta_m^{-s}$.  With this in mind, we use Lemmas~\ref{lem:galois} and \ref{lem:stp=2} to establish the next theorem.

\begin{theorem}\label{thm:fullchar2}
Let $p=2$ and let $m\geq3$ be an integer such that $\gcd(2,m)=1$. Let $u'$ be the order of $2$ modulo $m$, i.e., $u'$ is the smallest positive integer such that $2^{u'}\mod{1}{m}$. Further, let $\delta$ be the degree of the minimal polynomial of $\zeta_m^s+\zeta_m^{-s}$ over $\Z_2$, where $\gcd(s,m)=1$. Then
$$\delta=\begin{cases}
\frac{u'}{2}&\text{if $u'$ is even and $2^{\frac{u'}{2}}\mod{-1}{m}$};\\
u'&\text{otherwise}.
\end{cases}$$
\end{theorem}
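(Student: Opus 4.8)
The plan is to apply Lemma~\ref{lem:galois} to convert the degree $\delta$ into a condition on powers of $2$ modulo $m$, and then to read off the minimal exponent from the order $u'$. By Lemma~\ref{lem:galois}, $\delta$ is the smallest positive integer for which $\left(\zeta_m^s+\zeta_m^{-s}\right)^{2^\delta}=\zeta_m^s+\zeta_m^{-s}$. Since the ground field has characteristic $2$, the Frobenius map is additive, so
$$\left(\zeta_m^s+\zeta_m^{-s}\right)^{2^\delta}=\zeta_m^{s\cdot2^\delta}+\zeta_m^{-s\cdot2^\delta}.$$
Applying Lemma~\ref{lem:stp=2} with $t=s\cdot2^\delta$ (legitimate because $\gcd(2,m)=1$), this equals $\zeta_m^s+\zeta_m^{-s}$ exactly when $s\cdot2^\delta\mod{\pm s}{m}$. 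As $\gcd(s,m)=1$, I may cancel $s$ to reach the clean criterion $2^\delta\mod{\pm1}{m}$. Thus $\delta$ is the smallest positive integer satisfying $2^\delta\mod{\pm1}{m}$.

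It remains to express this smallest exponent in terms of $u'$. Because $2^{u'}\mod{1}{m}$ always satisfies the criterion, we have $\delta\leq u'$, and within the range $1\leq\delta\leq u'$ the congruence $2^\delta\mod{1}{m}$ holds only at $\delta=u'$. So a strictly smaller exponent can arise only through $2^\delta\mod{-1}{m}$. If this holds for some $\delta$, squaring gives $u'\mid2\delta$, while $u'\nmid\delta$ since $m\geq3$ is odd and hence $-1\not\equiv1$; this forces $u'$ to be even. Inside the cyclic group $\langle2\rangle\subseteq(\Z/m\Z)^\times$ of order $u'$, the residue $-1$ has order $2$, and a cyclic group of even order contains a unique element of order $2$, namely $2^{u'/2}$. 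Therefore $2^\delta\mod{-1}{m}$ is solvable if and only if $u'$ is even and $2^{u'/2}\mod{-1}{m}$, in which case the least such $\delta$ is $u'/2$.

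Combining the two possibilities gives the stated dichotomy: when $u'$ is even and $2^{u'/2}\mod{-1}{m}$, the minimal exponent is $\delta=u'/2<u'$; otherwise no power of $2$ below $u'$ reaches $\pm1$, so $\delta=u'$. I expect the group-theoretic step to be the main obstacle, where one must argue that membership $-1\in\langle2\rangle$ pins $-1$ down as the order-$2$ element $2^{u'/2}$ rather than an arbitrary square root of unity, invoking the uniqueness of the involution in a cyclic group.
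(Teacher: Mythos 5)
Your proposal is correct and takes essentially the same approach as the paper: both invoke Lemma~\ref{lem:galois} together with Lemma~\ref{lem:stp=2} to reduce $\delta$ to the least positive exponent with $2^\delta\equiv\pm1\pmod{m}$, and then read the answer off from the order $u'$. The only cosmetic difference is the endgame, where the paper pins down $\delta\in\left\{\frac{u'}{2},u'\right\}$ via the divisibilities $\delta\mid u'$ and $u'\mid2\delta$, whereas you identify $-1$, when it lies in $\langle 2\rangle$, as the unique involution $2^{u'/2}$ of that cyclic group --- equivalent arguments.
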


\begin{proof}
By Lemma~\ref{lem:galois},
$$\zeta_m^{s\cdot2^\delta}+\zeta_m^{-s\cdot2^\delta}=\zeta_m^s+\zeta_m^{-s}.$$
By Lemma~\ref{lem:stp=2}, we have
$$s\cdot2^\delta\mod{\pm s}{m},$$
which simplifies to $2^\delta\mod{\pm1}{m}$.

Since $2^{u'}\mod{1}{m}$, we have
$$\zeta_m^{s\cdot2^{u'}}+\zeta_m^{-s\cdot2^{u'}}=\zeta_m^s+\zeta_m^{-s},$$
thus $\delta\mid u'$. On the other hand, $2^{2\delta}=(2^\delta)^2\mod{1}{m}$, so $u'\mid2\delta$. Hence, $\delta\in\{\frac{u'}{2},u'\}$. Therefore, the smallest positive integer $\delta$ satisfying $2^\delta\mod{\pm1}{m}$ is $\delta=\frac{u'}{2}$ if $2^{\frac{u'}{2}}\mod{-1}{m}$, which implicitly implies that $u'$ is even; otherwise, $\delta=u'$.
\end{proof}

Bringing together the results so far presented allows us to establish the main theorem of this section.

\begin{theorem}\label{thm:factorizationform}
Let $m$ be a positive integer such that $\gcd(p,m)=1$, and let $n=p^km$, where $k$ is a nonnegative integer.
\begin{itemize}
\item If $m=1$, then $\Psi_n(x)$ factors in $\Z_p[x]$ as $\left(x^2+4\right)^{\frac{\varphi(p^k)}{2}}$.
\item If $m=2$, then $\Psi_n(x)$ factors in $\Z_p[x]$ as $x^{\varphi(p^k)}$.
\item If $m\geq3$ and $p>2$, then let $\delta$ be defined as in Theorem~$\ref{thm:fullchar}$. In this case, $\Psi_n(x)$ factors in $\Z_p[x]$ as a product of $\varphi(m)/\delta$ distinct irreducible monic polynomials of degree $\delta$, each raised to the $\varphi(p^k)$-th power.
\item If $m\geq3$ and $p=2$, then let $\delta$ be defined as in Theorem~$\ref{thm:fullchar2}$. In this case, $\Psi_n(x)$ factors in $\Z_p[x]$ as a product of $\varphi(m)/(2\delta)$ distinct irreducible monic polynomials of degree $\delta$, each raised to the $2\varphi(p^k)$-th power.
\end{itemize}
\end{theorem}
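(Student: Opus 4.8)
The plan is to strip off the prime-power part $p^k$ using Theorems~\ref{thm:Psi_p^k} and \ref{thm:Psi_p^km}, reducing every case to the factorization of $\Psi_m(x)$ in $\Z_p[x]$, and then to factor $\Psi_m(x)$ by combining Levy's root form with the uniform degree $\delta$ supplied by Theorems~\ref{thm:fullchar} and \ref{thm:fullchar2} through a Frobenius-orbit count. The two degenerate cases are immediate: for $m=1$ we have $n=p^k$ and the assertion is precisely Theorem~\ref{thm:Psi_p^k}; for $m=2$ (which forces $p$ odd) Theorem~\ref{thm:Psi_p^km} gives $\Psi_n(x)\mod{x^{\varphi(p^k)}}{p}$ since $\Psi_2(x)=x$.

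For $m\geq3$, Theorem~\ref{thm:Psi_p^km} yields $\Psi_n(x)\mod{\Psi_m(x)^{\varphi(p^k)}}{p}$, so it suffices to factor $\Psi_m(x)$ over $\Z_p$. I would work in $\overline{\F_p}$ and read Levy's root form $\Psi_m(x)=\prod_{\substack{1\leq s\leq m\\\gcd(s,m)=1}}\bigl(x-\zeta_4(\zeta_{2m}^s+\zeta_{2m}^{-s})\bigr)$ there: the elementary symmetric functions of the roots are the integer coefficients of $\Psi_m$, so reduction modulo $p$ carries this factorization to the corresponding one over $\overline{\F_p}$, with each $\zeta$ sent to the $\overline{\F_p}$-root of unity of the same order. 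The crux is then (i) to determine the multiplicities of the roots $\zeta_4(\zeta_{2m}^s+\zeta_{2m}^{-s})$ and (ii) to split the distinct roots into Frobenius orbits, each of size exactly $\delta$ by Lemma~\ref{lem:galois} together with the fact that Theorems~\ref{thm:fullchar} and \ref{thm:fullchar2} assign the same $\delta$ to every admissible $s$.

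When $p>2$, I would first note that $p$ is odd and $\gcd(p,m)=1$, so $p\nmid 2m$ and hence $\Delta(\Psi_m(x))\not\mod{0}{p}$ by Corollary~\ref{cor:discriminant} (equivalently, Lemma~\ref{lem:st}$(\ref{item:st=})$ shows the $\varphi(m)$ roots are pairwise distinct, since no two admissible $s,t$ satisfy $s\mod{\pm t}{2m}$). Thus $\bar\Psi_m$ is separable, and its $\varphi(m)$ distinct roots break into $\varphi(m)/\delta$ Frobenius orbits of size $\delta$, each producing a distinct irreducible monic factor of degree $\delta$. Raising to the $\varphi(p^k)$-th power through Theorem~\ref{thm:Psi_p^km} gives the third bullet.

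When $p=2$ (so $m$ is odd), the new feature is that the roots collapse in pairs: as recorded before Theorem~\ref{thm:fullchar2}, $\zeta_4=1$ and $\zeta_{2m}=\zeta_m$ in $\overline{\F_2}$, so each root becomes $\zeta_m^s+\zeta_m^{-s}$, and by Lemma~\ref{lem:stp=2} the values $s$ and $m-s$ (both coprime to $m$, and distinct because $m$ is odd) give the same root while no further coincidences occur. Hence the $\varphi(m)$ factors pair off as $\bar\Psi_m(x)=g(x)^2$, where $g$ is the separable product over the $\varphi(m)/2$ distinct roots. Applying the orbit count to $g$ with the degree $\delta$ of Theorem~\ref{thm:fullchar2} partitions these roots into $\varphi(m)/(2\delta)$ orbits of size $\delta$, so $g=\prod_i q_i$ with the $q_i$ distinct irreducible of degree $\delta$ and $\bar\Psi_m=\prod_i q_i^2$; Theorem~\ref{thm:Psi_p^km} then gives $\Psi_n\mod{\prod_i q_i^{2\varphi(2^k)}}{2}$, which is the fourth bullet. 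I expect this characteristic-$2$ case to be the main obstacle, since one must justify that the multiplicity is exactly two—equivalently, that $g$ is genuinely separable, consistent with the factor $2^{\varphi(m)}$ appearing in $\Delta(\Psi_m(x))$—and then carry the multiplicities cleanly through the orbit count and the final exponent $2\varphi(p^k)$.
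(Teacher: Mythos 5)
Your proposal is correct and takes essentially the same route as the paper: the paper likewise reduces to $\Psi_m(x)$ via Theorems~\ref{thm:Psi_p^k} and \ref{thm:Psi_p^km}, uses Lemma~\ref{lem:st} (resp.\ Lemma~\ref{lem:stp=2}) to show the roots are distinct when $p>2$ (resp.\ pair off as $s\leftrightarrow m-s$ when $p=2$), and invokes Theorems~\ref{thm:fullchar} and \ref{thm:fullchar2} for the common degree $\delta$. Your write-up merely makes explicit the Frobenius-orbit count and the multiplicity-two structure $\bar{\Psi}_m=g^2$ in characteristic $2$ that the paper's terse proof leaves implicit.
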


\begin{proof}
The cases when $m=1$ and $m=2$ follow directly from Theorems~\ref{thm:Psi_p^k} and \ref{thm:Psi_p^km}, respectively. When $m\geq3$ and $p>2$, all roots $\zeta_4\left(\zeta_{2m}^s+\zeta_{2m}^{-s}\right)$ of $\Psi_m(x)$ are distinct by Lemma~\ref{lem:st}, so the result follows from Theorems~\ref{thm:Psi_p^km} and \ref{thm:fullchar}. Finally, when $m\geq3$ and $p=2$, two roots $\zeta_m^s+\zeta_m^{-s}$ and $\zeta_m^t+\zeta_m^{-t}$ of $\Psi_m(x)$ are equal if and only if $t=m-s$ by Lemma~\ref{lem:stp=2}, so the result follows from Theorems~\ref{thm:Psi_p^km} and \ref{thm:fullchar2}.
\end{proof}

\section{Acknowledgments}
These results are based on work supported by the National Science Foundation under the grant numbered DMS-1852378.

\end{document}